\documentclass{amsart}

\newtheorem{thm}{Theorem}
\newtheorem{defn}{Definition}
\newtheorem{cor}{Corollary}
\newtheorem{conj}{Conjecture}
\newtheorem{rem}{Remark}

\begin{document}

\title[Root Lattice Point Counts in Hypercubes]{Point Counts of $D_k$ and Some $A_k$ and $E_k$ Integer Lattices Inside Hypercubes}

\author{Richard J. Mathar}
\urladdr{http://www.strw.leidenuniv.nl/~mathar}
\email{mathar@strw.leidenuniv.nl}
\address{Leiden Observatory, Leiden University, P.O. Box 9513, 2300 RA Leiden, The Netherlands}

\subjclass[2010]{Primary 52B05, 06B05; Secondary 05B35, 52B20}

\date{\today}
\keywords{root lattices, polytopes, infinity norm, hypercube, centered multinomial coefficient}

\begin{abstract}
Regular integer lattices are characterized by $k$ unit vectors
that build up their generator matrices. These have rank $k$ for $D$-lattices, and are
rank-deficient for $A$-lattices, for $E_6$ and $E_7$.
We count lattice points
inside hypercubes
centered at the origin for all three types, as if classified
by maximum infinity norm in the host lattice.
The results assume polynomial format as a function of the hypercube edge length.
\end{abstract}

\maketitle
\section{Scope} \label{sec.scop} 
We consider infinite translationally invariant point lattices set up
by generator matrices $G$
\begin{equation}
p_i=\sum_{j=1}^k G_{ij}\alpha_j
\end{equation}
which select point coordinates $p$ given a vector of integers $\alpha$.
In a purely geometric-enumerative manner we count all
points that reside inside a hypercube defined by $|p_i|\le n$, $\forall i$.
These numbers shall be called $A_k^b(n)$, $D_k^b(n)$ and $E_k^b(n)$
for the three lattice types dealt with.
In the incremental version of boxing the hypercubes, the points that
are on the surface of the hypercube are given the upper index $s$,
\begin{equation}
A_k^s(n)=A_k^b(n)-A_k^b(n-1), \,
D_k^s(n)=D_k^b(n)-D_k^b(n-1), \,
\mathrm{and}\,
E_k^s(n)=E_k^b(n)-E_k^b(n-1), 
\label{eq.ADEs}
\end{equation}
the first differences of the ``bulk'' numbers with respect to the edge size $n$.

There is vague resemblance to volume computation of the polytope defined
in $\alpha$-space by other straight cuts in $p$-space \cite{LawrenceMC57,Kasprzykarxiv10}.

In all cases discussed, the generating functions $D_k^b(x)$, $A_k^b(x)$
or $E_k^b(x)$ are rational functions with a factor $(1-x)^k$ in the
denominator.
They count sequences starting with a value of 1 at $n=0$\@.
The generating functions of the first differences, $D_k^s(x)$ etc.,
are therefore obtained by decrementing the exponent of $1-x$ in these denominators
by one \cite{Riordan,Wilf}, and have not been written down individually for that reason.

The manuscript considers first the $D$-lattices $D_6$--$D_4$ in tutorial detail in
sections \ref{sec.D2}--\ref{D4.sec}, then the case of general $k$ in
Section \ref{D5.sec}.
The points in $A_2$--$A_4$ are counted in sections \ref{A2.sec}--\ref{sec.A4}
by examining sums over the $\alpha$-coefficients,
and the general value of $k$ is addressed by summation over $p$-coordinates
in Section \ref{sec.A6}.
The cases $E_6$--$E_8$ are reduced to the earlier lattice counts
in sections \ref{sec.E6}--\ref{sec.E8}.

\section{Lattice $D_2$} \label{sec.D2} 

In the $D_2$ lattice, the expansion coefficients $\alpha_i$ and Cartesian
coordinates $p_i$ are connected by
\begin{equation}
\begin{pmatrix}
1 & 1 \\
1 & -1 \\
\end{pmatrix}
\cdot
\begin{pmatrix}
\alpha_1 \\
\alpha_2 \\
\end{pmatrix}
=
\begin{pmatrix}
p_1 \\
p_2 \\
\end{pmatrix}
.
\end{equation}
If we read the two lines of this system of equations separately, 
points inside the square $|p_i|\le n$ ($i=1,2$) are constrained to 
$\alpha$-coordinates inside a tilted square, as shown in Figure \ref{sq1.fig}.

\setlength{\unitlength}{0.4cm}
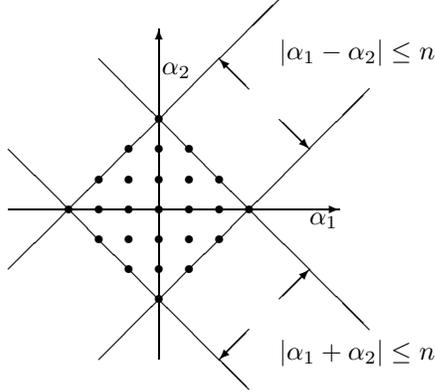
\begin{figure}[htb]
\begin{picture}(11,11)
\put(0,5){\vector(1,0){11}}
\put(10,4.5){$\alpha_1$}
\put(5,0){\vector(0,1){11}}
\put(5.1,9.5){$\alpha_2$}
\put(0,3){\line(1,1){9}}
\put(0,7){\line(1,-1){8}}
\put(3,10){\line(1,-1){9}}
\put(3,0){\line(1,1){9}}
\put(9,10){$|\alpha_1-\alpha_2|\le n$}
\put(8,9){\vector(-1,1){1}}
\put(9,8){\vector(1,-1){1}}
\put(9,0){$|\alpha_1+\alpha_2|\le n$}
\put(8,1){\vector(-1,-1){1}}
\put(9,2){\vector(1,1){1}}
\put(2,5){\circle*{0.3}}
\put(3,4){\circle*{0.3}}
\put(3,5){\circle*{0.3}}
\put(3,6){\circle*{0.3}}
\put(4,3){\circle*{0.3}}
\put(4,4){\circle*{0.3}}
\put(4,5){\circle*{0.3}}
\put(4,6){\circle*{0.3}}
\put(4,7){\circle*{0.3}}
\put(5,2){\circle*{0.3}}
\put(5,3){\circle*{0.3}}
\put(5,4){\circle*{0.3}}
\put(5,5){\circle*{0.3}}
\put(5,6){\circle*{0.3}}
\put(5,7){\circle*{0.3}}
\put(5,8){\circle*{0.3}}
\put(6,3){\circle*{0.3}}
\put(6,4){\circle*{0.3}}
\put(6,5){\circle*{0.3}}
\put(6,6){\circle*{0.3}}
\put(6,7){\circle*{0.3}}
\put(7,4){\circle*{0.3}}
\put(7,5){\circle*{0.3}}
\put(7,6){\circle*{0.3}}
\put(8,5){\circle*{0.3}}
\end{picture}
\caption{The conditions $|\alpha_1\pm \alpha_2|\le n$ select
two orthogonal diagonal stripes in the $(\alpha_1,\alpha_2)$-plane. Their
intersection is a tilted square centered at the origin.
}
\label{sq1.fig}
\end{figure}
The point count inside the square is
\begin{equation}
D_2^b = \sum_{|\alpha_1-\alpha_2|\le n}
\sum_{|\alpha_1+\alpha_2|\le n} 1.
\end{equation}
Resummation considering the two non-overlapping triangles below and above
the horizontal axis yields
\begin{multline}
D_2^b =
\sum_{\alpha_2=-n}^0
\sum_{\alpha_1=-n-\alpha_2}^{\alpha_2+n} 1
+
\sum_{\alpha_2=1}^n
\sum_{\alpha_1=\alpha_2-n}^{n-\alpha_2} 1
\label{eq.D2b}
\\
=
\sum_{\alpha_2=-n}^0
(2\alpha_2+2n+1)
+
\sum_{\alpha_2=1}^n
(2n-2\alpha_2+1).
\end{multline}
We will frequently sum over low order multinomials of this type with
a basic formula in terms of Bernoulli Polynomials $B$,
\cite[(0.121)]{GR}\cite[(1.2.11)]{ZwillingerCRC}\cite{EdwardsAMM93}
\begin{equation}
\sum_{m=1}^j m^k = \frac{B_{1+k}(j+1)-B_{1+k}(0)}{1+k}
.
\label{eq.Bern}
\end{equation}
Application to (\ref{eq.D2b}) and its first differences yields
essentially sequences A001844 and A008586 of the Online Encylopedia
of Integer Sequences (OEIS) \cite{EIS}:
\begin{thm}(Lattice points in the bulk and on the surface of $D_2$)
\begin{equation}
D_2^b(n)= 2n^2+2n+1 = 1,5,13,25,\ldots ;\quad D_2^s(n) = \begin{cases}
1,& n=0; \\
4n,& n>0.\\
\end{cases}
\end{equation}
\end{thm}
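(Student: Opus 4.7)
The plan is to directly evaluate the two arithmetic progressions appearing on the right-hand side of (\ref{eq.D2b}), which is essentially the $k=1$ case of the Bernoulli summation formula (\ref{eq.Bern}). After a shift of summation index, each inner expression becomes a sum of the first odd integers.

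First I would treat the lower triangle. Substituting $m=\alpha_2+n$ turns $\sum_{\alpha_2=-n}^{0}(2\alpha_2+2n+1)$ into $\sum_{m=0}^{n}(2m+1)=(n+1)^2$, using the elementary identity $1+3+5+\cdots+(2n+1)=(n+1)^2$ (or equivalently $B_2(n+2)-B_2(0)$ divided by $2$, doubled, plus $n+1$). Then I would treat the upper triangle similarly: substituting $m=n-\alpha_2$ turns $\sum_{\alpha_2=1}^{n}(2n-2\alpha_2+1)$ into $\sum_{m=0}^{n-1}(2m+1)=n^2$. Adding the two contributions gives $D_2^b(n)=(n+1)^2+n^2=2n^2+2n+1$, matching the closed form in the statement. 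As a sanity check one can verify $D_2^b(0)=1$, $D_2^b(1)=5$, $D_2^b(2)=13$ against the figure.

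For the surface count I would simply invoke the definition (\ref{eq.ADEs}). For $n\ge 1$,
\begin{equation}
D_2^s(n)=D_2^b(n)-D_2^b(n-1)=(2n^2+2n+1)-\bigl(2(n-1)^2+2(n-1)+1\bigr)=4n,
\end{equation}
while the $n=0$ case is the single point at the origin, so $D_2^s(0)=D_2^b(0)=1$ by convention.

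There is no real obstacle here: the only subtlety is getting the index bounds of the split in (\ref{eq.D2b}) right so that the horizontal axis is not double-counted (the row $\alpha_2=0$ is placed in the first sum, and $\alpha_2\ge 1$ in the second). Once that bookkeeping is done, the rest is an application of the closed form for $\sum m$, which is the base instance of (\ref{eq.Bern}).
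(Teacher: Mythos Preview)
Your proof is correct and follows exactly the route the paper indicates: evaluate the two arithmetic sums in (\ref{eq.D2b}) via the $k=1$ instance of (\ref{eq.Bern}) and then take first differences via (\ref{eq.ADEs}). The paper does not spell out the substitutions $m=\alpha_2+n$ and $m=n-\alpha_2$, but they are precisely what is meant by ``application to (\ref{eq.D2b}).''
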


\section{Lattice $D_3$} \label{D3.sec} 

The relation between expansion coefficients $\alpha_i$ and Cartesian
coordinates $p_i$ for the $D_3$ lattice is
\begin{equation}
\begin{pmatrix}
1 & 1 & 0 \\
1 & -1 & 1 \\
0 & 0 & -1 \\
\end{pmatrix}
\cdot
\begin{pmatrix}
\alpha_1 \\
\alpha_2 \\
\alpha_3 \\
\end{pmatrix}
=
\begin{pmatrix}
p_1 \\
p_2 \\
p_3 \\
\end{pmatrix}
.
\end{equation}
The determinant of the Generator Matrix is non-zero; by multiplication
with the inverse matrix, a form more suitable to the counting problem 
results:
\begin{equation}
\begin{pmatrix}
1/2 & 1/2 & 1/2 \\
1/2 & -1/2 & -1/2 \\
0 & 0 & -1 \\
\end{pmatrix}
\cdot
\begin{pmatrix}
p_1 \\
p_2 \\
p_3 \\
\end{pmatrix}
=
\begin{pmatrix}
\alpha_1 \\
\alpha_2 \\
\alpha_3 \\
\end{pmatrix}
.
\label{D3inv.eq}
\end{equation}
$D_3^b(n)$ is the number of integer solutions restricted to the
cube $-n\le p_i\le n$. This is the full triple sum
$(2n+1)^3$---where $2n+1$ sizes the edge length of the cube---minus the number of
solutions of (\ref{D3inv.eq}) that result in non-integer $\alpha_i$.
The structure of the three equations in (\ref{D3inv.eq}) suggests
to separate the cases according to the parities of $p_3$ and $p_1+p_2$:
\begin{equation}
D_3^b(n)
= \sum_{
{ |p_1|\le n, |p_2|\le n, |p_3|\le n \atop p_1+p_2+p_3\,\mathrm{even}}
}
1
=
\sum_{ |p_1|\le n, |p_2|\le n\atop p_1+p_2\,\mathrm{even} }
\sum_{|p_3|\le n \atop p_3\,\mathrm{even} }
1
+
\sum_{ |p_1|\le n, |p_2|\le n\atop p_1+p_2\,\mathrm{odd} }
\sum_{ |p_3|\le n \atop p_3\,\mathrm{odd} }
1
.
\label{D3b1.eq}
\end{equation}
The auxiliary sums are examined separately for even and odd $n$
\cite[A109613,A052928]{EIS}:
\begin{eqnarray}
\sum_{ |p_3|\le n \atop p_3\,\mathrm{even} } 1
&=& n+\frac{1+(-1)^n}{2} = 1,1,3,3,5,5,7,7,9,9,\ldots; \label{V1g.eq}\\
\sum_{ |p_3|\le n \atop p_3\,\mathrm{odd} } 1
&=& n+\frac{1-(-1)^n}{2} = 0,2,2,4,4,6,6,8,8\ldots.
\label{V1u.eq}
\end{eqnarray}
The parity-filtered double sum of (\ref{D3b1.eq})
over the square in $(p_1,p_2)$-space
selects points on lines parallel to the diagonal.
\begin{defn}\label{V.def}(Order of even (g) and odd (u) point sets in $k$-dimensional hypercube planes)
\begin{equation}
V_k^g(n) \equiv
\sum_{ |p_i|\le n\atop p_1+p_2+\cdots p_k\,\mathrm{even} }1;
\quad
V^u_k(n) \equiv
\sum_{ |p_i|\le n\atop p_1+p_2+\cdots p_k\,\mathrm{odd} }1.
\label{V.eq}
\end{equation}
\end{defn}
This decomposition
applies
to higher dimensions recursively:
\begin{eqnarray}
V^g_k(n) &=& V^u_{k-1}(n)V^u_1(n)+V^g_{k-1}(n)V^g_1(n); \label{Vgrec.eq}\\
V^u_k(n) &=& V^u_{k-1}(n)V^g_1(n)+V^g_{k-1}(n)V^u_1(n).
\end{eqnarray}
Starting from $V^g_1(n)$ and $V^u_1(n)$ given in (\ref{V1g.eq})--(\ref{V1u.eq}),
the recurrences
provide Table \ref{V.tab}. The two disjoint sets of lattice points
complement
the hypercube:
\begin{equation}
V^g_k(n) + V^u_k(n) = (2n+1)^k.
\label{Vcompl.eq}
\end{equation}
\begin{table}
\caption{Low-dimensional examples of the lattice sums
(\ref{V.eq}).}
\begin{tabular}{ll}
index & value \\
\hline
$V^g_1(n)$ & $n+\frac{1+(-)^n}{2}$
\\
$V^u_1(n)$ & $n+\frac{1-(-)^n}{2}$
\\
$V^g_2(n)$ & $2n^2+2n+1$ 
\\
$V^u_2(n)$ & $2n(n+1)$ 
\\
$V^g_3(n)$ & $4n^3+6n^2+3n+\frac{1+(-)^n}{2}$ 
\\
$V^u_3(n)$ & $4n^3+6n^2+3n+\frac{1-(-)^n}{2}$ 
\\
$V^g_4(n)$ & $8n^4+16n^3+12n^2+4n+1$
\\
$V^u_4(n)$ & $4n(n+1)(2n^2+2n+1)$
\\
$V^g_5(n)$ & $16n^5+40n^4+40n^3+20n^2+5n+\frac{1+(-)^n}{2}$
\\
$V^u_5(n)$ & $16n^5+40n^4+40n^3+20n^2+5n+\frac{1-(-)^n}{2}$
\\
$V^g_6(n)$ & $(2n^2+2n+1)(16n^4+32n^3+20n^2+4n+1)$
\\
$V^u_6(n)$ & $2n(n+1)(4n^2+2n+1)(4n^2+6n+3)$
\\
$V^g_7(n)$ & $64n^7+224n^6+336n^5+280n^4+140n^3+42n^2+7n+\frac{1+(-)^n}{2}$
\\
$V^u_7(n)$ & $64n^7+224n^6+336n^5+280n^4+140n^3+42n^2+7n+\frac{1-(-)^n}{2}$
\\
$V^g_8(n)$ & $128n^8+512n^7+896n^6+896n^5+560n^4+224n^3+56n^2+8n+1$
\\
$V^u_8(n)$ & $8n(n+1)(2n^2+2n+1)(8n^4+16n^3+12n^2+4n+1)$
\end{tabular}
\label{V.tab}
\end{table}
\begin{thm}\label{V.thm}(fcc lattice counts for edge measure $2n+1$)
\begin{equation}
V_k^g(n) = \begin{cases}
\dfrac{(2n+1)^k}{2}+\dfrac{1}{2}, & k\,\mathrm{even}; \\
\dfrac{(2n+1)^k}{2}+\dfrac{(-)^n}{2}, & k\,\mathrm{odd}. \\
\end{cases}
\end{equation}
\end{thm}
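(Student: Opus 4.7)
The plan is to avoid the recurrences (\ref{Vgrec.eq}) in their original coupled form by passing to the sum and the difference of $V_k^g$ and $V_k^u$, which decouple completely. The sum is handled immediately by (\ref{Vcompl.eq}), and the difference will satisfy a trivial multiplicative recurrence.

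First I would introduce the auxiliary quantity $W_k(n)\equiv V_k^g(n)-V_k^u(n)$. Subtracting the two lines of (\ref{Vgrec.eq}) and factoring gives
\begin{equation}
W_k = V_{k-1}^g(V_1^g-V_1^u) - V_{k-1}^u(V_1^g-V_1^u) = W_{k-1}\,W_1,
\end{equation}
so by induction $W_k=W_1^k$. From (\ref{V1g.eq})--(\ref{V1u.eq}) the base case evaluates to $W_1=V_1^g-V_1^u=(-1)^n$, whence $W_k=(-1)^{nk}$: this equals $1$ whenever $k$ is even, and $(-1)^n$ whenever $k$ is odd.

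Then I would simply add (\ref{Vcompl.eq}) to $W_k=V_k^g-V_k^u$ and divide by two:
\begin{equation}
V_k^g(n)=\tfrac{1}{2}\bigl[(2n+1)^k+W_k\bigr],
\end{equation}
which splits into the two cases claimed in the theorem. The companion formula for $V_k^u$ follows at once by subtracting $W_k$ instead.

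There is really no hard step here; the only thing to watch is the sign bookkeeping when subtracting the two recurrences so that the common factor $V_1^g-V_1^u$ comes out with the correct sign, and the check that the parity of $k$ (rather than of $nk$) is what controls which of the two cases one lands in — odd $k$ preserves the $(-1)^n$ oscillation of the one-dimensional base case, while even $k$ washes it out to $+1$.
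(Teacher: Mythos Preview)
Your proof is correct and is essentially the same inductive argument the paper sketches: both use the recurrence (\ref{Vgrec.eq}) together with (\ref{Vcompl.eq}) and the base cases (\ref{V1g.eq})--(\ref{V1u.eq}). Your passage to the difference $W_k=V_k^g-V_k^u$ is simply a clean way to carry out that induction, turning the coupled recurrences into the one-line multiplicative relation $W_k=W_1^k=(-1)^{nk}$ before recombining with the sum (\ref{Vcompl.eq}); this is exactly the kind of detail the paper leaves implicit in its one-sentence proof.
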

\begin{proof}
The proof is simple by induction with the aid of
(\ref{Vgrec.eq}) and (\ref{Vcompl.eq}),
using $V_1^g(n)$ of (\ref{V1g.eq}) and $V_1^u(n)$ of (\ref{V1u.eq}).
\end{proof}

$D_3^b(n)$ in (\ref{D3b1.eq}) equals $V^g_3(n)$ by definition. $D_3^s$ and $D_3^b$ are sequences
A110907 and A175109 in the OEIS \cite{EIS}.
\begin{thm}(Lattice points in the bulk and on the surface of $D_3$)
\begin{eqnarray}
D_3^b(n) & =& 4n^3+6n^2+3n+\frac{1+(-)^n}{2} = 1,13,63,171,365,665\ldots ; \\
\quad D_3^s(n)  & =& \begin{cases}
1,& n=0; \\
12n^2+1+(-1)^n& n>0.\\
\end{cases}
= 1,12,50,108,194,300,434,\ldots
\end{eqnarray}
\end{thm}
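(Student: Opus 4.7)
The plan is to reduce the bulk count to Theorem \ref{V.thm} and then obtain the surface count by first-differencing. Since the paragraph preceding the theorem already identifies $D_3^b(n) = V_3^g(n)$, no further parity analysis is required: the bulk formula is immediate from Theorem \ref{V.thm} specialised to the odd case $k = 3$, namely $V_3^g(n) = [(2n+1)^3 + (-1)^n]/2$. Expanding $(2n+1)^3 = 8n^3 + 12n^2 + 6n + 1$ and halving reproduces the asserted polynomial $4n^3 + 6n^2 + 3n + \frac{1 + (-1)^n}{2}$.

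For the surface count I would invoke the definition (\ref{eq.ADEs}) and subtract. Using the elementary identities $n^3 - (n-1)^3 = 3n^2 - 3n + 1$ and $n^2 - (n-1)^2 = 2n - 1$, the polynomial part of $D_3^b(n) - D_3^b(n-1)$ collapses to $(12n^2 - 12n + 4) + (12n - 6) + 3 = 12n^2 + 1$, while the parity correction contributes $\frac{(-1)^n - (-1)^{n-1}}{2} = (-1)^n$. Summing gives $12n^2 + 1 + (-1)^n$, valid for every $n \ge 1$.

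The piecewise specification at $n = 0$ is a convention rather than a computation: $D_3^b(-1)$ is not defined as a point count, so one simply reads $D_3^s(0) = D_3^b(0) = 1$ from the definition instead of from the closed form (which would mis-predict the value $2$ at $n = 0$). With Theorem \ref{V.thm} in hand there is no genuine obstacle; the only care required is selecting the correct odd-$k$ branch of Theorem \ref{V.thm} and handling the $n = 0$ boundary of $D_3^s$ separately.
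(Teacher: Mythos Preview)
Your proof is correct and follows the paper's own route: the paper establishes $D_3^b(n)=V_3^g(n)$ in the paragraph before the theorem and reads the value from Theorem~\ref{V.thm} (equivalently Table~\ref{V.tab}), exactly as you do, and the surface count then follows by the first-difference definition~(\ref{eq.ADEs}). The only addition you provide is the explicit arithmetic for $D_3^s(n)$, which the paper omits; your handling of the $n=0$ boundary is also appropriate.
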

The corresponding recurrences and generating function are
\begin{gather}
D_3^b(n)  = 3D_3^b(n-1)
-2D_3^b(n-2)
-2D_3^b(n-3)
+3D_3^b(n-4)
-D_3^b(n-5) ; \\
D_3^b(x) =\frac{(1+6x+x^2)(1+4x+x^2)}{(1+x)(1-x)^4}; \\
D_3^s(n)  = 2D_3^s(n-1)
-2D_3^s(n-3)
+D_3^s(n-4);\quad (n>3).
\end{gather}

\section{Lattice $D_4$} \label{D4.sec} 

The transformation between expansion coefficients and Cartesian
coordinates in the $D_4$ case reads
\begin{equation}
\begin{pmatrix}
1 & 1 & 0 & 0 \\
1 & -1 & 1 & 0 \\
0 & 0 & -1 & 1 \\
0 & 0 & 0 & -1 \\
\end{pmatrix}
\cdot
\begin{pmatrix}
\alpha_1 \\
\alpha_2 \\
\alpha_3 \\
\alpha_4 \\
\end{pmatrix}
=
\begin{pmatrix}
p_1 \\
p_2 \\
p_3 \\
p_4 \\
\end{pmatrix}
.
\end{equation}
The technique of counting points inside cubes is the same as
in the previous section. Inversion of the $4\times 4$ matrix yields
\begin{equation}
\begin{pmatrix}
1/2 & 1/2 & 1/2 & 1/2 \\
1/2 & -1/2 & -1/2 & -1/2 \\
0 & 0 & -1 & 1 \\
0 & 0 & 0 & -1 \\
\end{pmatrix}
\cdot
\begin{pmatrix}
p_1 & \\
p_2 & \\
p_3 & \\
p_4 & \\
\end{pmatrix}
=
\begin{pmatrix}
\alpha_1 \\
\alpha_2 \\
\alpha_3 \\
\alpha_4 \\
\end{pmatrix}
.
\end{equation}
We wish to count all lattice points subject
to the constraint $|p_i|\le n$ ($i=1,\ldots 4$), and the first two
lines of the previous equation require in addition that the
sum over all four $p_i$ is even to keep all four $\alpha_i$ in the
integer domain:
\begin{equation}
D_4^b(n)
= \sum_{
|p_1|\le n, |p_2|\le n, |p_3|\le n, |p_4|\le n \atop
p_1+p_2+p_3+p_4\,\mathrm{even}
}
1
.
\end{equation}
This expression is $V_4^g(n)$ already computed above. $D_4^s(n)$
is OEIS sequence A117216; $D_4^b(n)$ is A175110 \cite{EIS}.
\begin{thm}(Lattice points in the bulk and on the surface of $D_4$)
\begin{eqnarray}
D_4^b(n) & =& 1+4n+12n^2+16n^3+8n^4 \\
  &=& 1,41,313,1201,3281,7321,14281,25313,41761,65161,97241\ldots ; \nonumber \\
\quad D_4^s(n)  & =& \begin{cases}
1,& n=0; \\
8n(1+4n^2) & n>0;\\
\end{cases}
\\
&=& 1,40,272,888,2080,4040,6960,11032,16448,23400,32080\ldots \nonumber
\end{eqnarray}
\end{thm}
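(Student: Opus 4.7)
The plan is to reduce the $D_4$ count to machinery already developed in the paper and then carry out an elementary polynomial calculation. The substantive work is already done: the displayed derivation immediately preceding the theorem identifies
\[
D_4^b(n) = V_4^g(n),
\]
so everything reduces to the $k=4$ case of the sums tabulated in Table \ref{V.tab} and closed in Theorem \ref{V.thm}.

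First I would invoke Theorem \ref{V.thm} with $k=4$ (even), which gives the compact closed form $V_4^g(n) = \tfrac{1}{2}\bigl((2n+1)^4+1\bigr)$. Expanding $(2n+1)^4 = 16n^4+32n^3+24n^2+8n+1$ and halving after adding $1$ yields the claimed polynomial $1+4n+12n^2+16n^3+8n^4$. As a sanity check one can alternatively derive the same polynomial directly from the recurrences (\ref{Vgrec.eq}) starting from $V_1^g$ and $V_1^u$, which is in fact the $k=4$ row of Table \ref{V.tab}; matching the two routes confirms the coefficients. The tabulated numerical values $1,41,313,1201,\ldots$ follow by plugging $n=0,1,2,3,\ldots$ into the polynomial.

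For the surface count $D_4^s(n)$, I would apply the definition in (\ref{eq.ADEs}) for $n\ge 1$ and handle $n=0$ separately by noting $D_4^b(0)=1$. For $n\ge 1$ the computation is the routine first difference
\[
D_4^s(n) = \bigl(8n^4+16n^3+12n^2+4n+1\bigr) - \bigl(8(n-1)^4+16(n-1)^3+12(n-1)^2+4(n-1)+1\bigr),
\]
and after cancellation the odd-degree terms survive, producing $32n^3+8n = 8n(1+4n^2)$, matching the statement.

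The only potential obstacle is purely bookkeeping: one must be careful that the parity-constraint sum defining $D_4^b(n)$ is exactly $V_4^g(n)$ (note that the first two rows of the inverse matrix force $p_1+p_2+p_3+p_4$ to be even, while the last two rows impose no parity restriction since they already yield integer $\alpha_i$). Once this identification is in hand the proof is mechanical, so there is no serious difficulty; I would flag this parity check explicitly to make the reduction to $V_4^g$ airtight.
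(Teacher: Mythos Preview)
Your proposal is correct and follows essentially the same route as the paper: the text immediately preceding the theorem already reduces $D_4^b(n)$ to $V_4^g(n)$, and you simply invoke Theorem~\ref{V.thm} (or equivalently the $k=4$ row of Table~\ref{V.tab}) and then take the first difference~(\ref{eq.ADEs}) for $D_4^s(n)$. Your added parity bookkeeping and the explicit polynomial expansions only make the paper's terse ``already computed above'' more explicit.
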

The associated generating function and recurrences are
\begin{gather}
D_4^b(x)  = \frac{1+36x+118x^2+36x^3+x^4}{(1-x)^5} ;\\
D_4^b(n)  = 5D_4^b(n-1) -10D_4^b(n-2) +10D_4^b(n-3)
-5D_4^b(n-4) +D_4^b(n-5);\\
D_4^s(n)   = 4D_4^s(n-1) -6D_4^s(n-2) +4D_4^s(n-3) -D_4^s(n-4);\quad (n>4).
\end{gather}

\section{Lattices $D_k$, general $k$ } \label{D5.sec} 
No new aspect arises in comparison to the previous two sections
\cite{SerraSagrIPL76}.
The $D_k^b(n)$ equal the $V_k^g(n)$ and their first differences 
constitute the $D_k^s(n)$:
\begin{gather}
D_5^b(n)= 16n^5+40n^4+40n^3+20n^2+5n+\frac{1+(-)^n}{2} ; \\
D_5^s(n)= \begin{cases}
1, & n=0;\\
1+40n^2+80n^4+(-)^n, & n>0;
\end{cases}
 \\
D_6^b(n)= 32n^6+96n^5+120n^4 +80n^3 +30n^2 +6n+1; \\
D_6^s(n)= \begin{cases}
1, & n=0;\\
4n(1+12n^2)(3+4n^2), & n>0;
\end{cases}
 ; \\
D_7^b(n)= 64n^7+225n^6+336n^5+280n^4+130n^3+43n^2+7n+\frac{1+(-)^n}{2}; \\
D_7^s(n)= \begin{cases}
1, & n=0;\\
1+84n^2+560n^4+448n^6+(-)^n, & n>0.
\end{cases}
\end{gather}
$D_5$ and $D_6$ are materialized as sequences A175111 to A175114 \cite{EIS}.
All cases
are summarized in a Corollary to Theorem \ref{V.thm}:
\begin{cor}($D_k$ Lattice points inside the hypercube)
\begin{equation}
D_k^b(n) = \begin{cases}
\frac{(2n+1)^k}{2}+\frac{1}{2}, & k\,\mathrm{even}; \\
\frac{(2n+1)^k}{2}+\frac{(-)^n}{2}, & k\,\mathrm{odd}. \\
\end{cases}
\label{eq.Dkbn}
\end{equation}
\begin{equation}
D_k^s(n) = \begin{cases}
\frac{(2n+1)^k}{2}-\frac{(2n-1)^k}{2}, & k\,\mathrm{even},\,n>0; \\
\frac{(2n+1)^k}{2}-\frac{(2n-1)^k}{2}+(-)^n, & k\,\mathrm{odd},\,n>0. \\
\end{cases}
\end{equation}
\end{cor}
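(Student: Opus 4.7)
The plan is to split the corollary into two assertions, prove the bulk formula first, and then deduce the surface formula as a mechanical first difference.

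For the bulk count, I would generalize the matrix inversion argument from Sections~\ref{D3.sec} and~\ref{D4.sec}. The $D_k$ generator matrix has the same block structure as the $D_3$ and $D_4$ cases: the first column is $(1,1,0,\ldots,0)^T$, the second is $(1,-1,0,\ldots,0)^T$, and the remaining columns form a bidiagonal block with $-1$ on the diagonal and $1$ above it. Inverting this produces the two top rows $(1/2,1/2,\pm 1/2,\pm 1/2,\ldots,\pm 1/2)$ and a lower block in which every entry is integer-valued. Consequently $\alpha\in\mathbb Z^k$ if and only if the two half-sum rows return integers, and a short induction on the sign pattern shows that the combined integrality condition reduces to the single parity constraint $p_1+p_2+\cdots+p_k\equiv 0\pmod 2$. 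This step is the one I expect to be the main obstacle, since the inverse must be written down carefully enough to see that no new congruence appears from the lower block. Once this is done, the defining sum for $D_k^b(n)$ coincides with $V_k^g(n)$ of Definition~\ref{V.def}, and Theorem~\ref{V.thm} delivers the closed form~(\ref{eq.Dkbn}) immediately.

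For the surface count I would just compute $D_k^s(n)=D_k^b(n)-D_k^b(n-1)$ from~(\ref{eq.ADEs}) using the just-proved bulk formula. In the even-$k$ case the additive constant $\tfrac{1}{2}$ cancels and one is left with $\tfrac{1}{2}[(2n+1)^k-(2n-1)^k]$. In the odd-$k$ case the same polynomial difference appears, plus the parity term $\tfrac{1}{2}[(-)^n-(-)^{n-1}]=(-)^n$, which accounts for the extra summand in the stated formula. The restriction $n>0$ is needed because $D_k^b(-1)$ is not defined, and the value at $n=0$ must be taken from the bulk expression, which gives $D_k^s(0)=D_k^b(0)=1$.

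In summary, the outline is: (i) verify by inversion of the $D_k$ generator matrix that the bulk condition reduces to an overall parity constraint on the $p_i$; (ii) identify $D_k^b(n)$ with $V_k^g(n)$ and invoke Theorem~\ref{V.thm}; (iii) subtract to obtain $D_k^s(n)$, treating parities of $k$ separately. Step~(i) carries all the substantive content; steps~(ii) and~(iii) are bookkeeping.
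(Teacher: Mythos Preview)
Your proposal is correct and follows essentially the same route as the paper: the paper simply asserts at the start of Section~\ref{D5.sec} that ``no new aspect arises'' and that $D_k^b(n)=V_k^g(n)$, then presents the corollary as an immediate consequence of Theorem~\ref{V.thm}. Your outline in fact supplies more detail on step~(i) than the paper does, since the paper never writes out the general $k\times k$ inverse explicitly.
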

The generating functions are
\begin{equation}
D_k^b(x) = 
\begin{cases}
\dfrac{\sum_{i=0}^k \beta_i^g x^i}{(1-x)^{k+1}},& k\,\mathrm{even};\\
\dfrac{1+\sum_{i=1}^k \beta_i^u x^i}{(1+x)(1-x)^{k+1}},& k\,\mathrm{odd};\\
\end{cases}
\end{equation}
where
\begin{equation}
2\beta_i^g\equiv
\sum_{t=0}^{i}
[(2i-2t+1)^k+1]
\binom{k+1}{t}(-)^t
,
\end{equation}
\begin{equation}
2\beta_i^u\equiv
\sum_{t=0}^{i}
[(2i-2t+1)^k+(-)^{i-t}]
\binom{k+1}{t}(-)^t
+
\sum_{t=0}^{i-1}
[(2i-2t-1)^k-(-)^{i-t}]
\binom{k+1}{t}(-)^t
.
\end{equation}

\begin{rem}
The $D_k^*$ lattices are characterized by
\begin{equation}
\begin{pmatrix}
1 & 0 & 0& \cdots & 0 & 1/2 \\
0 & 1 & 0& \cdots & 0 & 1/2 \\
0 & 0 & 1& \ddots & 0 & 1/2 \\
\vdots & \vdots & 0& 1& \ddots & 1/2 \\
\vdots & \vdots & \vdots & 0& 1& 1/2 \\
0 & 0 & 0& \cdots & 0& 1/2 \\
\end{pmatrix}
\cdot
\begin{pmatrix}
\alpha_1 \\
\alpha_2 \\
\alpha_3 \\
\vdots \\
\end{pmatrix}
=
\begin{pmatrix}
p_1 \\
p_2 \\
p_3 \\
\vdots \\
\end{pmatrix}
.
\end{equation}
Matrix inversion gives
\begin{equation}
\begin{pmatrix}
1 & 0 & \cdots & 0 & -1 \\
0 & 1 & 0& \vdots & -1 \\
\vdots & 0 & 1& \vdots & -1 \\
0 & \vdots & \ddots & 1& -1 \\
0 & 0 & \cdots & 0& 2 \\
\end{pmatrix}
\cdot
\begin{pmatrix}
p_1 \\
p_2 \\
p_3 \\
\vdots \\
\end{pmatrix}
=
\begin{pmatrix}
\alpha_1 \\
\alpha_2 \\
\alpha_3 \\
\vdots \\
\end{pmatrix}
.
\end{equation}
which shows that there is no constraint on generating any $p_i$ inside
the regions $|p_i|\le n$: The number of lattice points up to infinity norm
$n$ is simply $D_k^{*b}(n)=(2n+1)^k$.
\end{rem}

\section{Lattice $A_2$} \label{A2.sec}
$A_2^b(n)$ is the number of integer solutions to
\begin{equation}
\begin{pmatrix}
1 & 0 \\
-1 & 1 \\
0 & -1 \\
\end{pmatrix}
\cdot
\begin{pmatrix}
\alpha_1 \\
\alpha_2 \\
\end{pmatrix}
=
\begin{pmatrix}
p_1 \\
p_2 \\
p_3 \\
\end{pmatrix}
\end{equation}
in the range $|p_i|\le n$.
The three requirements from the three lines of this equation become
\begin{equation}
A_2^b =
\sum_{|\alpha_1|\le n}\,\sum_{|-\alpha_1+\alpha_2|\le n\atop |-\alpha_2|\le n}1
.
\end{equation}
\begin{figure}[htb]
\begin{picture}(16,12)
\put(0,5){\vector(1,0){15}}
\put(0,8){\line(1,0){13}}
\put(0,2){\line(1,0){13}}
\put(14,4.5){$\alpha_1$}
\put(5,0){\vector(0,1){11}}
\put(2,0){\line(0,1){11}}
\put(8,0){\line(0,1){11}}
\put(5.1,9.5){$\alpha_2$}
\put(0,3){\line(1,1){9}}
\put(3,0){\line(1,1){9}}
\put(9,10){$|-\alpha_1+\alpha_2|\le n$}
\put(8.5,9.5){\vector(-1,1){1}}
\put(9.5,8.5){\vector(1,-1){1}}
\put(12,6){\vector(0,1){2}}
\put(12,4){\vector(0,-1){2}}
\put(12,6.5){$|\alpha_2|\le n$}
\put(2,2){\circle*{0.3}}
\put(2,3){\circle*{0.3}}
\put(2,4){\circle*{0.3}}
\put(2,5){\circle*{0.3}}
\put(3,2){\circle*{0.3}}
\put(3,3){\circle*{0.3}}
\put(3,4){\circle*{0.3}}
\put(3,5){\circle*{0.3}}
\put(3,6){\circle*{0.3}}
\put(4,2){\circle*{0.3}}
\put(4,3){\circle*{0.3}}
\put(4,4){\circle*{0.3}}
\put(4,5){\circle*{0.3}}
\put(4,6){\circle*{0.3}}
\put(4,7){\circle*{0.3}}
\put(5,2){\circle*{0.3}}
\put(5,3){\circle*{0.3}}
\put(5,4){\circle*{0.3}}
\put(5,5){\circle*{0.3}}
\put(5,6){\circle*{0.3}}
\put(5,7){\circle*{0.3}}
\put(5,8){\circle*{0.3}}
\put(6,3){\circle*{0.3}}
\put(6,4){\circle*{0.3}}
\put(6,5){\circle*{0.3}}
\put(6,6){\circle*{0.3}}
\put(6,7){\circle*{0.3}}
\put(6,8){\circle*{0.3}}
\put(7,4){\circle*{0.3}}
\put(7,5){\circle*{0.3}}
\put(7,6){\circle*{0.3}}
\put(7,7){\circle*{0.3}}
\put(7,8){\circle*{0.3}}
\put(8,5){\circle*{0.3}}
\put(8,6){\circle*{0.3}}
\put(8,7){\circle*{0.3}}
\put(8,8){\circle*{0.3}}
\end{picture}
\caption{The conditions $|\alpha_1|\le n$ and $|\alpha_2|\le n$ select
a square in the $(\alpha_1,\alpha_2)$-plane. The 
requirement $|-\alpha_1+\alpha_2|\le n$ admits only values inside a diagonal
stripe. The intersection is the dotted hexagon.
}
\label{loz.fig}
\end{figure}
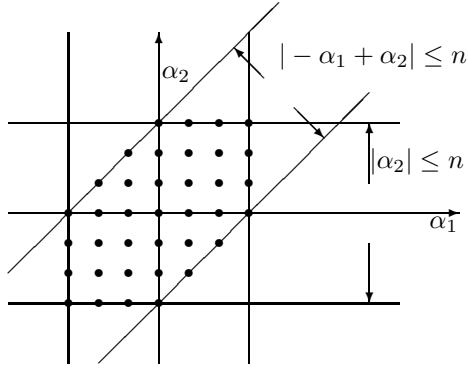
As outlined in Figure \ref{loz.fig}, decomposition of the conditions allows
resummation over the
quadrangles above and below the $\alpha_1$ axis:
\begin{equation}
A_2^b(n) =
\sum_{\alpha_2=-n}^0 \sum_{\alpha_1=-n}^{n+\alpha_2} 1
+
\sum_{\alpha_2=1}^n \sum_{\alpha_1=\alpha_2-n}^{n} 1
=
\sum_{\alpha_2=-n}^0 (2n+1+\alpha_2)
+
\sum_{\alpha_2=1}^n (2n+1-\alpha_2),
\end{equation}
further evaluated with \eqref{eq.Bern}.
\begin{thm}(Lattice points in the bulk and on the surface of $A_2$, \cite[A003215]{EIS})
\begin{equation}
A_2^b(n) = 1+3n(n+1) = 1,7,19,37,61,91,127,169,217,271,331,397,469,\ldots \\
\label{eq.A2}
\end{equation}
\end{thm}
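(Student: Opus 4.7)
The plan is to evaluate the two one-dimensional sums that the paper has already produced as the final reduction of the triangular-decomposition argument, then combine them algebraically. Each inner count $(2n+1\pm\alpha_2)$ is a linear polynomial in $\alpha_2$, so the only tool needed is the $k=0$ and $k=1$ cases of formula \eqref{eq.Bern}, i.e.\ the elementary identities $\sum_{m=1}^{n} 1 = n$ and $\sum_{m=1}^{n} m = n(n+1)/2$.

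First I would handle the lower sum $S_1 := \sum_{\alpha_2=-n}^{0}(2n+1+\alpha_2)$ by the reindexing $m := \alpha_2+n$, which turns it into $\sum_{m=0}^{n}(n+1+m) = (n+1)^{2} + n(n+1)/2 = (n+1)(3n+2)/2$. Next I would evaluate the upper sum $S_2 := \sum_{\alpha_2=1}^{n}(2n+1-\alpha_2)$ directly as $n(2n+1)-n(n+1)/2 = n(3n+1)/2$. Adding these, $S_1+S_2 = \bigl[(n+1)(3n+2)+n(3n+1)\bigr]/2 = (6n^{2}+6n+2)/2 = 3n^{2}+3n+1$, which is precisely $1+3n(n+1)$ as claimed. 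The initial values $1,7,19,37,\ldots$ then follow from direct substitution $n=0,1,2,3,\ldots$, and a comparison with OEIS A003215 confirms these as the centered hexagonal numbers, consistent with the hexagonal shape of the intersection region depicted in Figure \ref{loz.fig}.

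There is no real obstacle here: the geometric work has already been done in reducing the two-dimensional constrained count to the two univariate arithmetic progressions, and what remains is a one-line application of the closed form for $\sum m$. The only matter requiring any care is bookkeeping of the endpoints, in particular making sure that $\alpha_2=0$ is assigned to $S_1$ (as in the displayed decomposition) rather than to $S_2$, so that the hexagon's ``waist'' row of $2n+1$ points is counted exactly once; if one inadvertently includes $\alpha_2=0$ in both sums, the answer acquires a spurious additive $2n+1$. With the partition as written in the excerpt this pitfall is avoided, and the stated polynomial $1+3n(n+1)$ emerges immediately.
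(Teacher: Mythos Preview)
Your proposal is correct and follows exactly the paper's approach: the paper reduces $A_2^b(n)$ to the two sums $\sum_{\alpha_2=-n}^{0}(2n+1+\alpha_2)+\sum_{\alpha_2=1}^{n}(2n+1-\alpha_2)$ and then says ``further evaluated with \eqref{eq.Bern}'', which is precisely the elementary arithmetic-progression computation you carry out. Your bookkeeping remark about assigning $\alpha_2=0$ to $S_1$ matches the paper's split, and the result $1+3n(n+1)$ is obtained identically.
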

The first differences are
\cite[A008458]{EIS}
\begin{equation}
A_2^s(n) = \begin{cases}
1, & n=0;\\
6n, & n>0.\\
\end{cases}
\end{equation}

\section{Lattice $A_3$}
The generator matrix sets
\begin{equation}
\begin{pmatrix}
1 & 0 & 0 \\
-1 & 1 & 0 \\
0 & -1 & 1 \\
0 & 0 & -1 \\
\end{pmatrix}
\cdot
\begin{pmatrix}
\alpha_1 \\
\alpha_2 \\
\alpha_3 \\
\end{pmatrix}
=
\begin{pmatrix}
p_1 \\
p_2 \\
p_3 \\
p_4 \\
\end{pmatrix}
.
\end{equation}
This translates the four bindings $|p_i|\le n$ into four
constraints on the three $\alpha$:
\begin{equation}
A_3^b(n) = 
\sum_{|\alpha_1|\le n}
\,
\sum_{|-\alpha_1+\alpha_2|\le n\atop |-\alpha_2+\alpha_3|\le n}
\,
\sum_{|-\alpha_3|\le n} 1.
\end{equation}
Figure \ref{kit.fig} illustrates resummation of the format
\begin{equation}
\sum_{|\alpha_1|\le n}
\sum_{|-\alpha_1+\alpha_2|\le n}
1
=
\sum_{\alpha_2=-2n}^0 \sum_{\alpha_1=-n}^{\alpha_2+n}1
+\sum_{\alpha_2=1}^{2n} \sum_{\alpha_1=\alpha_2-n}^{n}1
.
\label{resukit.eq}
\end{equation}
\begin{figure}[htb]
\begin{picture}(11,11)
\put(0,5){\vector(1,0){11}}
\put(10,4.5){$\alpha_1$}
\put(5,0){\vector(0,1){11}}
\put(2,0){\line(0,1){11}}
\put(8,0){\line(0,1){11}}
\put(5.1,9.5){$\alpha_2$}
\put(0,3){\line(1,1){9}}
\put(3,0){\line(1,1){9}}
\put(9,10){$|-\alpha_1+\alpha_2|\le n$}
\put(8,9){\vector(-1,1){1}}
\put(9,8){\vector(1,-1){1}}
\put(2,0){\circle*{0.3}}
\put(2,1){\circle*{0.3}}
\put(2,2){\circle*{0.3}}
\put(2,3){\circle*{0.3}}
\put(2,4){\circle*{0.3}}
\put(2,5){\circle*{0.3}}
\put(3,0){\circle*{0.3}}
\put(3,1){\circle*{0.3}}
\put(3,2){\circle*{0.3}}
\put(3,3){\circle*{0.3}}
\put(3,4){\circle*{0.3}}
\put(3,5){\circle*{0.3}}
\put(3,6){\circle*{0.3}}
\put(4,1){\circle*{0.3}}
\put(4,2){\circle*{0.3}}
\put(4,3){\circle*{0.3}}
\put(4,4){\circle*{0.3}}
\put(4,5){\circle*{0.3}}
\put(4,6){\circle*{0.3}}
\put(4,7){\circle*{0.3}}
\put(5,2){\circle*{0.3}}
\put(5,3){\circle*{0.3}}
\put(5,4){\circle*{0.3}}
\put(5,5){\circle*{0.3}}
\put(5,6){\circle*{0.3}}
\put(5,7){\circle*{0.3}}
\put(5,8){\circle*{0.3}}
\put(6,3){\circle*{0.3}}
\put(6,4){\circle*{0.3}}
\put(6,5){\circle*{0.3}}
\put(6,6){\circle*{0.3}}
\put(6,7){\circle*{0.3}}
\put(6,8){\circle*{0.3}}
\put(6,9){\circle*{0.3}}
\put(7,4){\circle*{0.3}}
\put(7,5){\circle*{0.3}}
\put(7,6){\circle*{0.3}}
\put(7,7){\circle*{0.3}}
\put(7,8){\circle*{0.3}}
\put(7,9){\circle*{0.3}}
\put(7,10){\circle*{0.3}}
\put(8,5){\circle*{0.3}}
\put(8,6){\circle*{0.3}}
\put(8,7){\circle*{0.3}}
\put(8,8){\circle*{0.3}}
\put(8,9){\circle*{0.3}}
\put(8,10){\circle*{0.3}}
\put(8,11){\circle*{0.3}}
\end{picture}
\caption{The conditions $|\alpha_1|\le n$ and
$|-\alpha_1+\alpha_2|\le n$ select points in the dotted parallelogram.
}
\label{kit.fig}
\end{figure}
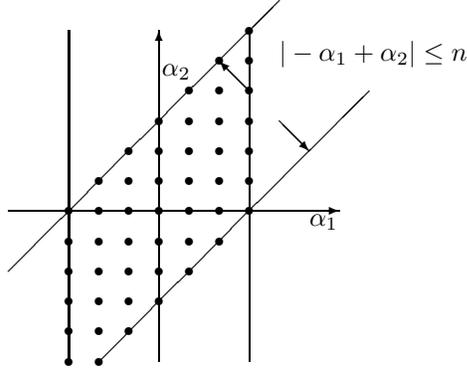
This is applied twice (note this factorization generates quad-sums which are
a convenient notation to keep track of the limits. The sums actually remain triple sums):
\begin{multline}
A_3^b(n) =
(\sum_{\alpha_2=-2n}^0\sum_{\alpha_1=-n}^{\alpha_2+n}1
+\sum_{\alpha_2=1}^{2n}\sum_{\alpha_1=\alpha_2-n}^{n}1
)
(\sum_{\alpha_2=-2n}^0\sum_{\alpha_3=-n}^{\alpha_2+n}1
+\sum_{\alpha_2=1}^{2n}\sum_{\alpha_3=\alpha_2-n}^{n}1
)
\\
=
\sum_{\alpha_2=-2n}^0\sum_{\alpha_1=-n}^{\alpha_2+n}
\sum_{\alpha_3=-n}^{\alpha_2+n}1
+\sum_{\alpha_2=1}^{2n}\sum_{\alpha_1=\alpha_2-n}^{n}
\sum_{\alpha_3=\alpha_2-n}^{n}1
\\
=
\sum_{\alpha_2=-2n}^0 (2n+1+\alpha_2)^2
+
\sum_{\alpha_2=1}^{2n} (2n+1-\alpha_2)^2.
\end{multline}
After binomial expansion, both remaining sums are reduced with (\ref{eq.Bern}):
\begin{thm}(Lattice points in the bulk and on the surface of $A_3$)
\begin{equation}
A_3^b(n)
= 1+\frac{2}{3}n(7+12n+8n^2)
= 1,19,85,231,489,891,1469,2255,3281,\ldots
\label{eq.A3}
\end{equation}
\begin{equation}
A_3^s(n) = \begin{cases}
1, & n=0\\
2+16n^2, & n>0\\
\end{cases}
=1,18,66,146,258,402,578,\ldots
\end{equation}
\end{thm}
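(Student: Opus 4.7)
My plan is to start from the single-variable expression already assembled immediately before the theorem,
\begin{equation*}
A_3^b(n) = \sum_{\alpha_2=-2n}^0 (2n+1+\alpha_2)^2 + \sum_{\alpha_2=1}^{2n} (2n+1-\alpha_2)^2,
\end{equation*}
and reindex each sum so that it becomes a standard sum of consecutive squares. For the first term I substitute $m = 2n+1+\alpha_2$, which sends $\alpha_2=-2n$ to $m=1$ and $\alpha_2=0$ to $m=2n+1$, turning it into $\sum_{m=1}^{2n+1} m^2$. For the second term the substitution $m=2n+1-\alpha_2$ yields $\sum_{m=1}^{2n} m^2$. Thus $A_3^b(n)=\sum_{m=1}^{2n+1} m^2+\sum_{m=1}^{2n} m^2$.

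I then apply (\ref{eq.Bern}) at $k=2$, namely $\sum_{m=1}^{j} m^2 = j(j+1)(2j+1)/6$, to both pieces. This gives $A_3^b(n) = \tfrac{1}{6}(2n+1)(2n+2)(4n+3) + \tfrac{1}{6}(2n)(2n+1)(4n+1)$. Factoring out the common $(2n+1)$ and simplifying the bracket leaves $(2n+1)(8n^2+8n+3)/3$; straight expansion then produces $1+\tfrac{2}{3}n(7+12n+8n^2)$, which is the stated closed form.

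For $A_3^s(n)$ I invoke the defining first-difference relation (\ref{eq.ADEs}). Subtracting $A_3^b(n-1)$ from $A_3^b(n)$ and using $n^3-(n-1)^3=3n^2-3n+1$ together with $n^2-(n-1)^2=2n-1$ collapses the cubic and quadratic orders; the arithmetic yields $16n^2+2$ for $n\ge 1$. The $n=0$ value is read off directly from $A_3^b(0)=1$ under the standing convention $A_3^b(-1)=0$.

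I do not expect a genuine obstacle here, but the one step that deserves care is the asymmetry of upper limits ($2n+1$ versus $2n$) in the two reindexed sums: this mismatch is exactly what produces the free constant term $1$ in $A_3^b(n)$ and the $+2$ rather than a pure-$n^2$ expression in $A_3^s(n)$, so an off-by-one slip would propagate into every coefficient of the closed form. A quick spot-check against the tabulated values $A_3^b(1)=19$ and $A_3^s(1)=18$ confirms the arithmetic.
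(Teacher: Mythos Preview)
Your proof is correct and follows essentially the same approach as the paper: both start from the single-variable expression $\sum_{\alpha_2=-2n}^{0}(2n+1+\alpha_2)^2+\sum_{\alpha_2=1}^{2n}(2n+1-\alpha_2)^2$ and reduce it via (\ref{eq.Bern}). The only cosmetic difference is that the paper binomially expands $(2n+1\pm\alpha_2)^2$ before summing, whereas you reindex first to obtain $\sum_{m=1}^{2n+1}m^2+\sum_{m=1}^{2n}m^2$ and apply the $k=2$ case of (\ref{eq.Bern}) directly; your route is marginally tidier but not a different method.
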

These are sequences A063496 and A010006 in the OEIS \cite{EIS}.

\section{Lattice $A_4$}\label{sec.A4}
$A_4$ is characterized by a quad-sum over $\alpha_i$ with five
constraints on the $p_i$ set up by the hypercube:
\begin{equation}
\begin{pmatrix}
1 & 0 & 0 & 0 \\
-1 & 1 & 0 & 0 \\
0 & -1 & 1 & 0 \\
0 & 0 & -1 & 1 \\
0 & 0 & 0 & -1 \\
\end{pmatrix}
\cdot
\begin{pmatrix}
\alpha_1 \\
\alpha_2 \\
\alpha_3 \\
\alpha_4 \\
\end{pmatrix}
=
\begin{pmatrix}
p_1 \\
p_2 \\
p_3 \\
p_4 \\
p_5 \\
\end{pmatrix}
.
\end{equation}
\begin{equation}
A_4^b(n)=
\sum_{|\alpha_1|\le n}
\,
\sum_{|-\alpha_1+\alpha_2|\le n}
\,
\sum_{|-\alpha_2+\alpha_3|\le n\atop |-\alpha_3+\alpha_4|\le n}
\,
\sum_{|\alpha_4|\le n} 1.
\end{equation}
The resummation (\ref{resukit.eq}) is separately applied to $(\alpha_1,\alpha_2)$
and
$(\alpha_3,\alpha_4)$; the entanglement between $\alpha_2$ and $\alpha_3$
is noted in the second factor:
\begin{multline}
A_4^b(n)
=
\left(
\sum_{\alpha_2=-2n}^0
\sum_{\alpha_1=-n}^{\alpha_2+n}
1
+
\sum_{\alpha_2=1}^{2n}
\sum_{\alpha_1=\alpha_2-n}^n
1\right)
\\ \times
\left(
\sum_{\alpha_3=-2n \atop |-\alpha_2+\alpha_3|\le n}^0
\sum_{\alpha_4=-n}^{\alpha_3+n}
1
+
\sum_{\alpha_3=1\atop |-\alpha_2+\alpha_3|\le n}^{2n}
\sum_{\alpha_4=\alpha_3-n}^n
1\right)
\\
=
\left(
\sum_{\alpha_2=-2n}^0
(2n+1+\alpha_2)
+
\sum_{\alpha_2=1}^{2n}
(2n+1-\alpha_2)\right)
\\ \times
\left(
\sum_{\alpha_3=-2n \atop |-\alpha_2+\alpha_3|\le n}^0
(2n+1+\alpha_3)
+
\sum_{\alpha_3=1\atop |-\alpha_2+\alpha_3|\le n}^{2n}
(2n+1-\alpha_3)
\right)
.
\label{A4fin.eq}
\end{multline}
Product expansion generates 4 terms. The coupling between $\alpha_2$ and $\alpha_3$
is rewritten individually in their 4 different quadrants
facilitated with Figure \ref{hex2.fig}.
\begin{figure}[htb]
\begin{picture}(18,13)
\put(3,6){\vector(1,0){17}}
\put(3,10){\line(1,0){16}}
\put(3,2){\line(1,0){16}}
\put(19,5.5){$\alpha_2$}
\put(10,0){\vector(0,1){14}}
\put(6,0.5){\line(0,1){12}}
\put(14,0.5){\line(0,1){12}}
\put(10.1,12.5){$\alpha_3$}
\put(3,1){\line(1,1){12}}
\put(6,0){\line(1,1){11}}
\put(6,2){\circle*{0.3}}
\put(6,3){\circle*{0.3}}
\put(6,4){\circle*{0.3}}
\put(7,2){\circle*{0.3}}
\put(7,3){\circle*{0.3}}
\put(7,4){\circle*{0.3}}
\put(7,5){\circle*{0.3}}
\put(8,2){\circle*{0.3}}
\put(8,3){\circle*{0.3}}
\put(8,4){\circle*{0.3}}
\put(8,5){\circle*{0.3}}
\put(8,6){\circle*{0.3}}
\put(9,3){\circle*{0.3}}
\put(9,4){\circle*{0.3}}
\put(9,5){\circle*{0.3}}
\put(9,6){\circle*{0.3}}
\put(9,7){\circle*{0.3}}
\put(10,4){\circle*{0.3}}
\put(10,5){\circle*{0.3}}
\put(10,6){\circle*{0.3}}
\put(10,7){\circle*{0.3}}
\put(10,8){\circle*{0.3}}
\put(11,5){\circle*{0.3}}
\put(11,6){\circle*{0.3}}
\put(11,7){\circle*{0.3}}
\put(11,8){\circle*{0.3}}
\put(11,9){\circle*{0.3}}
\put(12,6){\circle*{0.3}}
\put(12,7){\circle*{0.3}}
\put(12,8){\circle*{0.3}}
\put(12,9){\circle*{0.3}}
\put(12,10){\circle*{0.3}}
\put(13,7){\circle*{0.3}}
\put(13,8){\circle*{0.3}}
\put(13,9){\circle*{0.3}}
\put(13,10){\circle*{0.3}}
\put(14,8){\circle*{0.3}}
\put(14,9){\circle*{0.3}}
\put(14,10){\circle*{0.3}}
\end{picture}
\caption{The conditions $|\alpha_2|\le 2n$ and
$|\alpha_3|\le 2n$ define the large square, and
$|-\alpha_2+\alpha_3|\le n$ narrows the region down to the dotted hexagon.
}
\label{hex2.fig}
\end{figure}
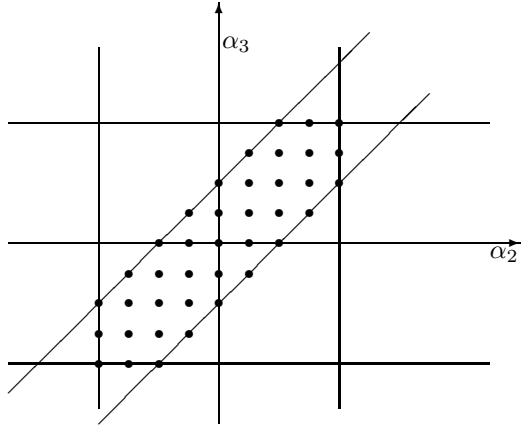
\begin{gather}
\sum_{\alpha_2=-2n}^0
\sum_{\alpha_3=-2n\atop |-\alpha_2+\alpha_3|\le n}^0
= 
\sum_{\alpha_3=-2n}^{-n}
\sum_{\alpha_2=-n}^{\alpha_3+n}
+
\sum_{\alpha_3=-n+1}^{0}
\sum_{\alpha_2=\alpha_3-n}^{0};\\
\sum_{\alpha_2=-2n}^0
\sum_{\alpha_3=1\atop |-\alpha_2+\alpha_3|\le n}^{2n}
= 
\sum_{\alpha_3=1}^n
\sum_{\alpha_2=\alpha_3-n}^0; \\
\sum_{\alpha_2=1}^{2n}
\sum_{\alpha_3=-2n\atop |-\alpha_2+\alpha_3|\le n}^0
= 
\sum_{\alpha_3=-n+1}^0
\sum_{\alpha_2=1}^{\alpha_3+n}; \\
\sum_{\alpha_2=1}^{2n}
\sum_{\alpha_3=1\atop |-\alpha_2+\alpha_3|\le n}^{2n}
= 
\sum_{\alpha_3=1}^n
\sum_{\alpha_2=1}^{\alpha_3+n}
+
\sum_{\alpha_3=n+1}^{2n}
\sum_{\alpha_2=\alpha_3-n}^{2n}.
\end{gather}
So $A_4^b$ in \eqref{A4fin.eq} translates into six elementary double sums over products of the
form $(2n+1\pm \alpha_2)(2n+1\pm \alpha_3)$, eventually handled with \eqref{eq.Bern}.
\begin{thm} (Lattice points in the bulk and on the surface of $A_4$ \cite[A083669]{EIS})
\begin{gather}
A_4^b(n) = 1+\frac{5}{12}n(n+1)(14+23n+23n^2)
 = 1, 51, 381, 1451, 3951, 8801, \ldots
\label{eq.A4}
\\
A_4^s(n) = \frac{5}{3}(7+23n^2)
 = 1,50,330,1070,2500,4850,8350\ldots
\end{gather}
\end{thm}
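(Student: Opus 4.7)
The plan is to pick up where equation \eqref{A4fin.eq} leaves off and carry the calculation through to a closed form. After the inner $\alpha_1$-sum and $\alpha_4$-sum have already been collapsed to $2n+1\pm\alpha_2$ and $2n+1\pm\alpha_3$, the whole count is a single bilinear double sum
\[
A_4^b(n)=\sum_{\alpha_2,\alpha_3}(2n+1\pm\alpha_2)(2n+1\pm\alpha_3),
\]
where the $\pm$ signs and the summation region depend on the quadrant of $(\alpha_2,\alpha_3)$ and on the coupling $|\alpha_3-\alpha_2|\le n$. First I would expand the product in \eqref{A4fin.eq} into the four sign combinations of $(\alpha_2,\alpha_3)$ and, using the four case-distinctions shown immediately after \eqref{A4fin.eq}, rewrite the domain as the six elementary rectangular/triangular pieces already enumerated in the excerpt.

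Next I would exploit the obvious symmetry $(\alpha_2,\alpha_3)\mapsto(-\alpha_2,-\alpha_3)$, which swaps the $(--)$ and $(++)$ quadrants and also swaps the $(-+)$ and $(+-)$ quadrants while turning the integrand $(2n+1+\alpha_2)(2n+1+\alpha_3)$ into $(2n+1-\alpha_2)(2n+1-\alpha_3)$. This identification cuts the six evaluations down to effectively three distinct shapes: two ``corner triangles'' contributed by the same-sign quadrants and one ``strip triangle'' contributed by the opposite-sign quadrants, each to be doubled.

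For each of those three pieces I would expand the integrand as $(2n+1)^2\pm(2n+1)(\alpha_2+\alpha_3)+\alpha_2\alpha_3$ and perform the inner sum by \eqref{eq.Bern}, which reduces it to a cubic in the outer variable; a second application of \eqref{eq.Bern} then yields a quartic in $n$. Summing the doubled contributions and combining, all cross terms should cancel cleanly and telescope into $1+\tfrac{5}{12}n(n+1)(14+23n+23n^2)$. The constant term $1$ is forced by the base case $A_4^b(0)=1$, which serves as a useful sanity check along with $A_4^b(1)=51$ and $A_4^b(2)=381$. The surface count then follows from a one-line subtraction $A_4^s(n)=A_4^b(n)-A_4^b(n-1)=\tfrac{5}{3}(7+23n^2)$ for $n\ge 1$, valid because the linear-in-$n$ and cubic-in-$n$ pieces of the bulk polynomial vanish upon differencing due to the factor $n(n+1)$.

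The main obstacle is purely bookkeeping: six regions, each with variable limits coupling $\alpha_2$ and $\alpha_3$, each producing a quartic in $n$, with delicate cancellations needed before the terms collapse into the single factored form stated. The symmetry reduction above is the key labor saver; without it one risks sign errors in the triangular endpoint contributions $\sum_{\alpha_3=\alpha_2-n}^{0}$ and $\sum_{\alpha_2=\alpha_3-n}^{2n}$, where the Bernoulli evaluation must be carried out with $\alpha_2$-dependent bounds before the outer $\alpha_2$-sum is done.
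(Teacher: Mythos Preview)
Your plan is the paper's own: expand \eqref{A4fin.eq} into the six elementary double sums indicated by the quadrant decomposition of the $(\alpha_2,\alpha_3)$ coupling and reduce each with \eqref{eq.Bern}. The reflection $(\alpha_2,\alpha_3)\mapsto(-\alpha_2,-\alpha_3)$ you invoke is a sound shortcut, but note that the paper's split $\alpha\le 0$ versus $\alpha\ge 1$ is not itself invariant under it, so the pairing of the $(++)$ and $(--)$ pieces holds only after the axis contributions $\alpha_2=0$ or $\alpha_3=0$ are accounted for separately (or after you repartition symmetrically about zero).

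One slip in your final sentence: differencing a quartic produces a cubic, not a quadratic, and a direct subtraction gives
\[
A_4^b(n)-A_4^b(n-1)=\tfrac{5}{3}\,n\,(7+23n^2);
\]
it is the constant and quadratic contributions that cancel, not the linear and cubic ones. The displayed $A_4^s$ formula in the statement is in fact missing a factor $n$, as the listed value $A_4^s(2)=330\neq\tfrac{5}{3}(7+23\cdot4)=165$ confirms, so your parity heuristic was tracking a typo rather than the true answer.
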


\section{Lattices $A_k$, $k>4$} \label{sec.A6}
Direct summation over the polytopes in $\alpha_i$-space becomes
increasingly laborious in higher dimensions; we switch to summation
in $p_i$-space based on the alternative
\begin{equation}
A_k^b(n) =
\sum_{|p_i|\le n,\, i=1,\ldots,k+1 \atop \sum_{i=1}^{k+1} p_i=0} 1.
\end{equation}
This is derived by adding a $k+1$-st unit vector with all components
equal to zero---with the exception of the last component--as a
final column to the generator matrix. (This
simple format suffices; laminations are not involved.)
An associated coefficient $\alpha_{k+1}$
embeds the lattice into full space, while
the condition $\alpha_{k+1}=0$ is maintained for the counting
process. Inversion of the matrix generator equation demonstrates that 
this zero-condition translates into the requirement on the sum
over the $p_i$ shown above. This point of view is occasionally used
to \emph{define} the $A$-lattices.

Counting the points subjected to some fixed $\sum_i p_i=m$ is equivalent to computation
of the multinomial coefficient
\begin{equation}
[x^m] (1+x+x^{-1}+x^2+x^{-2}+\cdots +x^n+x^{-n})^k.
\end{equation}
Balancing the accumulated powers 
as required for $A_k^b$ necessarily ties them to the central multinomial numbers
\cite{Belbachirarxiv07,BelbachirAMI35}:
\begin{equation}
A_{k-1}^b(n) = \binom{k}{nk}_{2n}
= \sum_{j=0}^{\lfloor nk/(2n+1) \rfloor}
(-)^j\binom{k}{j}
\binom{nk-j(2n+1)+k-1}{k-1}
.
\label{eq.Belb}
\end{equation}
Selecting values for the $p_i$ is equivalent to a Motzkin-path, picking
one term of each of the $k$ instances of the $1+x+x^{-1}$ of the trinomial,
for example
\cite{BlasiakJIS11}.
First, the formula is a route to quick numerical evaluation (Table \ref{Abconv.tab}).
Second, it proves that $A_k^b(n)$ is a polynomial of order $\le k$
in $n$, because each of the binomial factors in the $j$-sum is a polynomial
of order $k-1$. This is easily made more explicit by invocation of
the Stirling numbers of the first kind \cite{RiordanAMM71}\cite[(24.1.3)]{AS}.
\begin{rem}
This scheme of polynomial extension has
been used for coordination sequences before \cite{ConwayPRSC453},
and is found in growth series as well \cite{Ardilaarxiv08}.
\end{rem}

\begin{table}[htb]
\caption{$A_k^b(n)$
displaying columns of central 3-nomial, 5-nomial, 7-nomial  etc.\ numbers
\cite[A002426,A005191,A025012,A025014,A163269]{EIS}
}
\begin{tabular}{r|rrrrrrrrr}
$k\backslash n$ & 0 & 1 & 2 &3 &4 &5 &6 &7 & 8\\
\hline
1 & 1&3&5&7&9&11&13&15&17 \\
2 & 1&7&19&37&61&91&127&169&217 \\
3 & 1&19&85&231&489&891&1469&2255&3281 \\
4 & 1&51&381&1451&3951&8801&17151&30381&50101 \\
5 & 1&141&1751&9331&32661&88913&204763&418503&782153 \\
6 & 1&393&8135&60691&273127&908755&2473325&5832765&12354469 \\
7 & 1&1107&38165&398567&2306025&9377467&30162301&82073295&197018321 \\
8 & 1&3139&180325&2636263&19610233&97464799&370487485&1163205475&3164588407
\end{tabular}
\label{Abconv.tab}
\end{table}

By computing the initial terms of any $A_k$
numerically, the others follow by the
recurrence obeyed by $k$-th order polynomials \cite{FilipponiUQ2}:
\begin{equation}
A_k^b(n) = \sum_{j=1}^{k+1} \binom{k+1}{j} (-)^{j+1} A_k^b(n-j).
\end{equation}
\begin{thm}(Lattice points in the bulk and on the surface of $A_5$)
\begin{eqnarray}
A_5^b(n)
&=& \frac{1}{5}(2n+1)(5+27n+71n^2+88n^3+44n^4)
;
\label{eq.A5}
\\
A_5^s(n) &=& \begin{cases}
1, & n=0,\\
2+50n^2+88n^4, & n>0,
\end{cases}
=
1,140,1610,7580, 23330,\ldots
\end{eqnarray}
\end{thm}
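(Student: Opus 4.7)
The plan is to exploit the polynomial structure established in Section~\ref{sec.A6}. Each binomial coefficient $\binom{nk-j(2n+1)+k-1}{k-1}$ appearing in (\ref{eq.Belb}) is a polynomial of degree $k-1$ in $n$, so with $k=6$ the function $A_5^b(n)$ is a polynomial in $n$ of degree at most $5$. It is therefore determined uniquely by its values at any six integers.

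First, I would evaluate (\ref{eq.Belb}) directly at $n = 0, 1, 2, 3, 4, 5$, reproducing the sixth row of Table~\ref{Abconv.tab}, namely $1, 141, 1751, 9331, 32661, 88913$. These are short alternating binomial sums that terminate quickly because the index range $\lfloor nk/(2n+1)\rfloor$ stays small.

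Second, I would expand the proposed closed form $\tfrac{1}{5}(2n+1)(5+27n+71n^2+88n^3+44n^4)$ and check agreement at the same six points. Matching six points for polynomials of degree at most five forces equality for all nonnegative integers $n$ by Lagrange interpolation, establishing (\ref{eq.A5}). Equivalently, one could supply these six initial values to the recurrence $A_k^b(n) = \sum_{j=1}^{k+1}\binom{k+1}{j}(-1)^{j+1}A_k^b(n-j)$ quoted just before the theorem with $k=5$, noting that its characteristic polynomial $(x-1)^6$ has precisely the right form to accommodate an arbitrary degree-five polynomial solution.

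Third, for $A_5^s(n)$ with $n > 0$ I would perform the polynomial subtraction $A_5^b(n) - A_5^b(n-1)$ symbolically. The odd powers of $n$ cancel identically, leaving the even polynomial $2 + 50n^2 + 88n^4$; the numerical differences $140, 1610, 7580, \ldots$ extracted from the table confirm this. The boundary value $A_5^s(0) = 1$ follows from the convention in (\ref{eq.ADEs}), since $A_5^b(-1)$ is out of scope.

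The main obstacle is arithmetical rather than conceptual: the alternating sum in (\ref{eq.Belb}) already has several non-vanishing contributions at $n=5$, so careful bookkeeping is needed. A minor observation is that the odd-power coefficients vanish in $A_5^s$ for $n>0$; this reflects a reflection-type symmetry of $A_5^b(n)$ under $n \mapsto -n-1$ that one could establish independently from the symmetric roles of $p_i \mapsto -p_i$ in the defining constraints, but is not needed for the statement.
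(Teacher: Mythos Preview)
Your proposal is correct and follows essentially the same approach that the paper itself outlines in Section~\ref{sec.A6}: establish polynomiality of degree at most $k$ from \eqref{eq.Belb}, compute enough initial values (the paper points to Table~\ref{Abconv.tab} and the recurrence with characteristic polynomial $(x-1)^{k+1}$), and then read off the closed form, with $A_5^s$ obtained as the first difference via \eqref{eq.ADEs}. The only detail worth making explicit is that the upper limit $\lfloor 6n/(2n+1)\rfloor$ equals $2$ for all $n\ge 1$ and the $j=1,2$ summands vanish at $n=0$, so the sum may be taken over the fixed range $j=0,1,2$ and is therefore genuinely a polynomial in $n$; the paper glosses over this in the same way you do.
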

$A_5^b$ is a bisection of sequence A071816 of the OEIS \cite{EIS}.
$A_6^b$ is a bisection of sequence A133458 \cite{EIS}.
\begin{thm}($A_6$ and $A_7$ point counts)
\begin{gather}
A_6^b(n)
= 1+\frac{7}{180}n(n+1)(222+727n+1568n^2+1682n^3+841n^4)
.
\\
A_6^s(n) = \begin{cases}
1, & n=0,\\
\frac{7}{30}n(74+765n^2+841n^4), & n>0,\\
\end{cases} = 1,392,7742,52556,212436\ldots
\\
A_7^b(n)
=\frac{2n+1}{315}(315+2568n+10936n^2+26400n^3+37360n^4+28992n^5+9664n^6)
.
\end{gather}
\end{thm}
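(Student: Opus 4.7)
The plan is to lean entirely on the two facts already established immediately before the theorem: first, that $A_k^b(n)$ is given by the central multinomial coefficient expression (\ref{eq.Belb}), and second, that this expression exhibits $A_k^b(n)$ as a polynomial in $n$ of degree at most $k$. Once polynomiality of degree $\le k$ is in hand, any such polynomial is uniquely determined by its values at $k+1$ distinct points, so the task reduces to producing enough initial data and performing a one-shot interpolation.

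First I would generate the numerical values $A_k^b(0),A_k^b(1),\ldots,A_k^b(k)$ either by reading directly off the $k$-th row of Table \ref{Abconv.tab} or by a direct evaluation of the alternating sum in (\ref{eq.Belb}). For $k=5$ this gives the six values $1,141,1751,9331,32661,88913$; for $k=6$ the seven values $1,393,8135,60691,273127,908755,2473325$; for $k=7$ the eight values $1,1107,38165,398567,2306025,9377467,30162301,82073295$. Second, I would fit a polynomial of degree exactly $k$ through these nodes, most conveniently via Newton forward differences, and verify that the resulting coefficient vector agrees with the factorized closed form claimed in the statement. Because the interpolating polynomial is unique, the match of $k+1$ values with a degree-$k$ candidate is a proof of identity.

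For the surface counts $A_5^s$ and $A_6^s$ I would simply form $A_k^s(n)=A_k^b(n)-A_k^b(n-1)$ for $n\ge 1$, using the closed forms just obtained; the degree drops by one, and the exceptional value $A_k^s(0)=A_k^b(0)=1$ is recorded separately, exactly as in all earlier theorems of the paper. A redundant numerical check against an additional value $A_k^b(k+1)$ (already present in Table \ref{Abconv.tab}) would be included as a consistency test, since it confirms that the polynomial degree does not in fact rise to $k+1$, thereby discharging the ``$\le k$'' qualifier.

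The only real obstacle is arithmetic: (\ref{eq.Belb}) involves an alternating sum of products of binomial coefficients whose magnitudes grow rapidly, so the initial values must be tabulated with care to avoid propagating a single error through the interpolation. I would therefore cross-validate each entry by independently applying the recurrence $A_k^b(n)=\sum_{j=1}^{k+1}\binom{k+1}{j}(-)^{j+1}A_k^b(n-j)$, which the paper has already justified, to predict $A_k^b(k+1)$ and compare with the table; agreement there essentially certifies the entire fit.
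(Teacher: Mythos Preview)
Your proposal is correct and matches the paper's own (implicit) argument: the text immediately preceding the theorem establishes that \eqref{eq.Belb} makes $A_k^b(n)$ a polynomial in $n$ of degree at most $k$, so computing the first $k+1$ values from Table~\ref{Abconv.tab} and interpolating (equivalently, invoking the stated order-$k$ recurrence) is precisely how the paper intends these closed forms to be obtained. One small slip: the theorem under discussion contains only $A_6^s$, not $A_5^s$ (which belongs to the preceding theorem), but this does not affect the substance of your argument.
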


\begin{rem}
The $A_k^b(n)$ can be phrased as $k$-th order polynomials of
$L\equiv 2n+1$ with the same parity as $k$:
\begin{eqnarray}
A_1^b(L) &=& L; \\
A_2^b(L) &=& \frac{1}{4}+\frac{3}{4}L^2; \\
A_3^b(L) &=& \frac{1}{3}L+\frac{2}{3}L^3; \\
A_4^b(L) &=& \frac{9}{64}+\frac{25}{96}L^2+\frac{115}{192}L^4; \\
A_5^b(L) &=& \frac{1}{5}L+\frac{1}{4}L^3+\frac{11}{20}L^5; \\
A_6^b(L) &=& \frac{25}{256}+\frac{539}{2304}L^4+\frac{5887}{11520}L^6; \\
A_7^b(L) &=& \frac{1}{7}L+\frac{7}{45}L^3+\frac{2}{9}L^5+\frac{151}{315}L^7; \\
A_8^b(L) &=& \frac{1225}{16384}+\frac{3229}{28672}L^2+\frac{6063}{40960}L^4
  +\frac{867}{4096}L^6+\frac{259723}{573440}L^8.
\end{eqnarray}
\end{rem}
If we rewrite \eqref{eq.Belb} \cite{RoyAMM94}
\begin{equation}
A_{k-1}^b(n) = 
\sum_{j=0}^{\lfloor k/(2+1/n)\rfloor}
(-1)^j \frac{k}{j!} \frac{\Gamma[k(n+1)-j(2n+1)]}{\Gamma(k-j+1)\Gamma[kn-j(2n+1)+1]},
\label{eq.Akgamma}
\end{equation}
the multiplication formula
of the $\Gamma$-function converts
this
to terminating
Saalsch\"utzian
Hypergeometric Series:
\begin{gather}
A_{k-1}^b(1) = \frac{\Gamma(2k)}{\Gamma(k)\Gamma(k+1)}
\,_4F_3\left( \begin{array}{cc}-k,-\frac{k}{3},-\frac{k-1}{3},
-\frac{k-2}{3}\\
-\frac{2k-1}{3},-\frac{2k-2}{3},-\frac{2k}{3}+1
\end{array}\mid 1
\right)
,
\\
A_{k-1}^b(n) = \frac{\Gamma[(n+1)k]}{\Gamma(k)\Gamma(nk+1)}
\,_{2n+2}F_{2n+1}\left( \begin{array}{cc}-k,-\frac{nk}{2n+1},-\frac{nk-1}{2n+1},
-\frac{nk-2}{2n+1},\cdots,-\frac{nk-2n}{2n+1}\\
-\frac{(n+1)k-1}{2n+1},-\frac{(n+1)k-2}{2n+1},\cdots,-\frac{(n+1)k-2n-1}{2n+1}
\end{array}\mid 1
\right)
.
\end{gather}

The functional equation $\Gamma(m+1)=m\Gamma(m)$ presumably induces a non-linear
recurrence along each column of Table \ref{Abconv.tab}, as
shown by Sulanke for column $n=1$ \cite{SulankeJIS3}.
Numerical experimentation rather than proofs \cite{RieseJSC35} suggest:
\begin{conj} (Recurrences of centered 3-nomial, 5-nomial, 7-nomial coefficients)\label{conj.Akb}
\begin{gather}
(k+1)A_k^b(1) -(2k+1)A_{k-1}^b(1)-3kA_{k-2}^b(1) =0;\\
\begin{split}
2(k+1)(2k+1)A_k^b(2)+(k^2-49k-2)A_{k-1}^b(2)+5(-21k^2+37k-18)A_{k-2}^b(2)
\\
 -25(k-1)(k-4)A_{k-3}^b(2)+125(k-1)(k-2)A_{k-4}^b(2)=0.
\end{split}
\\
\begin{split}
3(3k+2)(3k+1)(k+1)A_k^b(3)
+(41k^3-600k^2-191k-6)A_{k-1}^b(3)
\\
+7(-383k^3+1458k^2-1927k+840)A_{k-2}^b(3)
 +49(-83k^3+1068k^2-4321k+5040)A_{k-3}^b(3)
\\
+343(199k^3-1890k^2+6017k-6390)A_{k-4}^b(3)
+2401(k-3)(43k^2-351k+722)A_{k-5}^b(3)
\\
-16807(k-3)(k-4)(5k-19)A_{k-6}^b(3)
-117649(k-5)(k-4)(k-3)A_{k-7}^b(3)
=0.
\end{split}
\end{gather}
\end{conj}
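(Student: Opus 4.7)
The plan is to establish each of the three conjectured recurrences by creative telescoping applied to the explicit sum (\ref{eq.Belb}), with $n \in \{1,2,3\}$ fixed and $k$ treated as the free parameter. Write the summand as
\[
F_n(k,j) = (-1)^j \binom{k+1}{j}\binom{n(k+1)-j(2n+1)+k}{k},
\]
so that $A_k^b(n) = \sum_j F_n(k,j)$ with the sum effectively finite because $F_n(k,j)$ vanishes outside $0 \le j \le \lfloor n(k+1)/(2n+1)\rfloor$. Each factor is a ratio of $\Gamma$-functions of integer linear combinations of $k$ and $j$, so $F_n$ is proper hypergeometric in $(k,j)$. Zeilberger's theorem then guarantees the existence of polynomials $p_0(k),\ldots,p_r(k)$, not all zero, together with a rational certificate $R_n(k,j)$, satisfying
\[
\sum_{i=0}^r p_i(k)\, F_n(k+i,j) \;=\; R_n(k,j+1)\, F_n(k,j+1) \,-\, R_n(k,j)\, F_n(k,j).
\]
Summing on $j$ collapses the right-hand side by telescoping (the boundary terms vanish with $F_n$), yielding a P-recursive identity $\sum_{i=0}^r p_i(k) A_{k+i}^b(n) = 0$ of the required form, after the reindexing $k \mapsto k-r$.

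The execution is mechanical: I would invoke a standard Zeilberger implementation (for instance via the algorithm cited in \cite{RieseJSC35}), read off the certificate $R_n$, and normalize the recurrence so its leading polynomial matches the one displayed in Conjecture \ref{conj.Akb}. To confirm that the algorithmic output really coincides with the stated recurrence, and not merely with a member of its holonomic ideal, I would first bound the order $r$ and the degrees $\deg p_i$ a priori from the shape of $F_n$, then compare both sides against enough initial values $A_k^b(n)$ drawn from Table \ref{Abconv.tab} to pin the polynomials down uniquely.

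The main obstacle is computational rather than conceptual. For $n=3$ the conjectured recurrence has order $7$ with cubic coefficients in $k$, so the Gosper-style linear system underlying the telescoping step has many unknowns; moreover, creative telescoping may initially return a proper left multiple of the minimal recurrence, in which case one must factor out an extraneous polynomial in $k$ to recover the clean form given in the conjecture. Once the certificate $R_n$ is in hand, the outstanding verification reduces to a polynomial identity in $k$, which can be checked coefficient by coefficient. As a sanity check, the case $n=1$ recovers Euler's classical three-term recurrence for the central trinomial coefficients, which is precisely the first formula of Conjecture \ref{conj.Akb}.
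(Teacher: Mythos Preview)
The paper offers no proof of this statement: it is explicitly labelled a \emph{Conjecture}, and the sentence introducing it reads ``Numerical experimentation rather than proofs \cite{RieseJSC35} suggest''. So there is nothing to compare your argument against; you are supplying what the author declined to carry out. Your plan---run Zeilberger's algorithm on the proper-hypergeometric summand of \eqref{eq.Belb} with $n\in\{1,2,3\}$ fixed and $k$ free---is the canonical attack on recurrences of this shape, and the very reference \cite{RieseJSC35} the author cites points to implementations of exactly this machinery. For $n=1$ the output is, as you say, the classical three-term recurrence for the central trinomial coefficients, so that case is already a theorem.

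One step deserves more care than you give it. You assert that the telescoping boundary terms vanish because $F_n(k,j)$ does, but the summand does \emph{not} have fully natural boundaries on the right: for $j$ beyond the upper limit $\lfloor n(k+1)/(2n+1)\rfloor$ but still below $k+1$, the factor $\binom{k+1}{j}$ is nonzero, while the top argument of $\binom{n(k+1)-j(2n+1)+k}{k}$ can become negative, and $\binom{N}{k}$ with $N<0$ is generically nonzero under the polynomial extension. Equivalently, in the $\Gamma$-form \eqref{eq.Akgamma} both a numerator and a denominator $\Gamma$ acquire poles in that range, so the summand does not obviously vanish there. You therefore need either the definite-sum variant of creative telescoping with the explicit upper limit (tracking the inhomogeneous boundary contribution), or a separate argument that the spurious terms cancel for integer $k$. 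This is routine, but it is precisely where automated proofs of this kind most often leak, so it should be stated rather than assumed.
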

\begin{rem}
Inverse binomial transformations of the $A_k^b(n)$ define
coefficients $\eta_{k,j}$ via
\begin{gather}
A_k^b(n) \equiv 1+2\sum_{j=1}^n \binom{n}{j}\eta_{k,j},\\
\eta_{k,j} = \frac{1}{2}\sum_{l=0}^j (-)^{j+l}\binom{j}{l}\binom{k+1}{l(k+1)}_{2l},
\label{bin.eq}
\end{gather}
as demonstrated in Table \ref{bern.tab}.
They are related to the partial fractions of the rational generating functions :
\begin{equation}
A_k^b(x) = \frac{1}{1-x}+2\sum_{j=1}^k \eta_{k,j}\frac{x^j}{(1-x)^{j+1}}
\equiv
\frac{\sum_{l=0}^k \gamma_{k,l}x^l}{(1-x)^{k+1}}.
\label{eq.gammadef}
\end{equation}
The first column and the diagonal of Table \ref{bern.tab} appear to be
sequences A097861 and A011818 of the OEIS, respectively \cite{EIS}.
\end{rem}
\begin{table}
\caption{Binomial coefficients $\eta_{k,j}$ of (\ref{bin.eq}).}
\begin{tabular}{r|rrrrrrrr}
$k\backslash j$ & 1 & 2 & 3 & 4 & 5 & 6 & 7 &8\\
\hline
1 & 1 \\
2 & 3 & 3\\
3 & 9 & 24 & 16 \\
4 & 25& 140 & 230 & 115 \\
5 & 70 & 735 & 2250 & 2640 & 1056 \\
6 & 196 & 3675 & 18732 & 38801 & 35322 & 11774 \\
7 & 553 & 17976 & 143696& 468160& 728448& 541184& 154624 \\
8 & 1569& 87024& 1052352& 5067288& 11994354& 14906484& 9350028& 2337507
\end{tabular}
\label{bern.tab}
\end{table}
\begin{rem}
From \eqref{eq.Belb} we deduce the numerator coefficients
defined in \eqref{eq.gammadef}:
\begin{equation}
\gamma_{k,l}=
\sum_{n=0}^l
\binom{k+1}{l-n}(-)^{l-n}
\binom{k+1}{n(k+1)}_{2n}.
\end{equation}
Some of these are shown in Table \ref{gamm.tab}. Caused by the
mirror symmetry of the coefficients,
$-1$ is a root of the polynomial $\sum_l \gamma_{k,l}x^l$ if $k$ is odd;
a factor $1+x$ may then be split off.
\end{rem}
\begin{table}
\caption{Synopsis of the numerators $\gamma_{k,l}$ of the generating functions \eqref{eq.gammadef}.
}
\begin{tabular}{r|rrrrrrrrrrr}
$k\backslash l$ & 0 & 1 & 2 &3 & 4 & 5 & 6 & 7 \\
\hline
1&1&1 \\
2&1&4&1 \\
3&1&15&15&1 \\
4&1&46&136&46&1 \\
5&1&135&920&920&135&1 \\
6&1&386&5405&11964&5405&386&1 \\
7&1&1099&29337&124187&124187&29337&1099&1 \\
8&1&3130&152110&1126258&2112016&1126258&152110&3130\\ 
9&1&8943&767460&9371472&29836764&29836764&9371472&767460\\ 
10&1&25642&3809367&73628622&372715542&626734120&372715542&73628622\\ 
\end{tabular}
\label{gamm.tab}
\end{table}

\begin{table}[htb]
\caption{$A_k^s(n)$
derived from Table \ref{Abconv.tab}, building differences between adjacent columns \cite[A175197]{EIS}. 
}
\begin{tabular}{r|rrrrrrrrr}
$k\backslash n$ & 0 & 1 & 2 &3 &4 &5 &6 &7 & 8\\
\hline
1 &      1&      2&      2&      2&      2&      2&      2&      2&      2\\
2&      1&      6&     12&     18&     24&     30&     36&     42&     48\\
3&      1&     18&     66&    146&    258&    402&    578&    786&   1026\\
4&      1&     50&    330&   1070&   2500&   4850&   8350&  13230&  19720\\
5&      1&    140&   1610&   7580&  23330&  56252& 115850& 213740& 363650\\
6&      1&    392&   7742&  52556& 212436& 635628&1564570&3359440&6521704\\
7&      1&   1106&  37058& 360402&1907458&7071442&20784834&51910994&114945026\\
8&      1&   3138& 177186&2455938&16973970&77854566&273022686&792717990&2001382932
\end{tabular}
\label{Asconv.tab}
\end{table}
Formula (\ref{eq.ADEs}) converts Table \ref{Abconv.tab} into Table \ref{Asconv.tab}.
And similar to Conjecture \ref{conj.Akb} we formulate recurrences
along columns of this derived table:
\begin{conj} (Recurrences of $A_k^s$)
\begin{equation}
(k+1)(k-1) A_k^s(1)-(3k^2-k-1)A_{k-1}^s(1)-k(k-2)A_{k-2}^s(1) +3k(k-1)A_{k-3}^s(1)=0,
\end{equation}
\begin{multline}
2(k-1)(2k+1)(k+1)(65576k-74745)A_k^s(2)
\\
+
(262304k^4-10212201k^3+21353744k^2-8959001k-149490) A_{k-1}^s(2)
\\
+2
(-6440305k^4+44418225k^3-87651471k^2+52631106k-4105233) A_{k-2}^s(2)
\\
+20
(811225k^4-3988621k^3+5814523k^2+2441684k-8566578) A_{k-3}^s(2)
\\
+2
(24847058k^4-190384802k^3+480247197k^2-462996527k+158679414) A_{k-4}^s(2)
\\
-(k-3)
(20387704k^3-72824267k^2-29485137k+331041750) A_{k-5}^s(2)
\\
-10(k-3)(k-4)
(3707581k^2-5729012k+3352341) A_{k-6}^s(2)
\\
+150(k-3)(k-4)(k-5)
(26006k+104375) A_{k-7}^s(2)
=0.
\end{multline}
\end{conj}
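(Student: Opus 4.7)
The plan is to derive both recurrences as corollaries of the defining identity $A_k^s(n)=A_k^b(n)-A_k^b(n-1)$ combined with the $A^b$-recurrences of Conjecture~\ref{conj.Akb}; the claim thus reduces to algebraically eliminating the ``column $n-1$'' contributions $A_k^b(n-1)$ from those relations.

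For $n=1$, substitute $A_k^b(1)=A_k^s(1)+1$ into the first line of Conjecture~\ref{conj.Akb} to obtain the inhomogeneous relation
\begin{equation*}
(k+1)A_k^s(1)-(2k+1)A_{k-1}^s(1)-3kA_{k-2}^s(1)=4k.
\end{equation*}
To annihilate the inhomogeneous right-hand side, take the same identity with $k$ replaced by $k-1$, multiply the original by $(k-1)$ and the shifted copy by $k$, and subtract: both right-hand sides become $4k(k-1)$ and cancel, while the left-hand coefficients collapse to $(k+1)(k-1)$, $-(2k+1)(k-1)-k^2=-(3k^2-k-1)$, $-3k(k-1)+(2k-1)k=-k(k-2)$, and $3k(k-1)$, matching the conjectured 4-term recurrence on the nose.

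For $n=2$, set $A_k^b(2)=A_k^s(2)+A_k^b(1)$ in the second line of Conjecture~\ref{conj.Akb} to obtain an order-$4$ relation of the shape $\mathcal{L}_2 A_k^s(2)=-\mathcal{L}_2 A_k^b(1)$, where $\mathcal{L}_2$ is the operator defined by that line and the right-hand side is a specific polynomial combination of five shifts of $A^b(1)$. The right-hand side must now be killed using the order-$2$ operator $\mathcal{L}_1$ for $A_k^b(1)$ coming from the first line of Conjecture~\ref{conj.Akb}. Viewing the backward shift $E:k\mapsto k-1$ as generator of the Ore algebra $\mathbb{Q}(k)\langle E\rangle$, I would compute an operator $\mathcal{M}\in\mathbb{Q}(k)\langle E\rangle$ of minimal order for which $\mathcal{M}\mathcal{L}_2$ lies in the left ideal generated by $\mathcal{L}_1$ (equivalently, the least common left multiple of $\mathcal{L}_2$ and $\mathcal{L}_1$). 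The composite $\mathcal{M}\mathcal{L}_2$, once denominators are cleared and the integer content is removed, then annihilates $A_k^s(2)$ and should reproduce the stated order-$7$ recurrence.

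The main obstacle is the $n=2$ step: the noncommutative polynomial arithmetic in $\mathbb{Q}(k)\langle E\rangle$ is heavy, large common polynomial factors appear after denominator clearing, and the order of $\mathcal{M}\mathcal{L}_2$ is not a priori forced to land exactly on $7$ --- unexpected cancellations could lower it, or the composite could turn out to be a non-minimal left multiple whose content must then be pulled out. Matching the precise cubic- and quartic-in-$k$ coefficient polynomials demands careful bookkeeping at every intermediate stage. A cleaner alternative that bypasses Conjecture~\ref{conj.Akb} altogether is to apply Zeilberger-style creative telescoping directly to the closed form~\eqref{eq.Belb} at $n=1$ and $n=2$; this produces $\mathcal{L}_1$ and $\mathcal{L}_2$ algorithmically, and so would prove Conjecture~\ref{conj.Akb} and the present conjecture simultaneously.
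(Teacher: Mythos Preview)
The paper offers no proof here: the statement is explicitly labeled a \emph{Conjecture}, introduced with ``similar to Conjecture~\ref{conj.Akb} we formulate recurrences,'' and the preceding Conjecture~\ref{conj.Akb} is itself presented as the product of ``numerical experimentation rather than proofs.'' So there is nothing in the paper to compare against; you are going strictly beyond it.

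Your $n=1$ derivation is clean and correct as a \emph{reduction}: the substitution $A_k^b(1)=A_k^s(1)+1$ gives the inhomogeneous relation with right-hand side $4k$, and the $(k-1)$-vs-$k$ weighted difference of two shifts kills it and reproduces the stated coefficients exactly. For $n=2$ your Ore-algebra plan (find $\mathcal{M}$ so that $\mathcal{M}\mathcal{L}_2$ lies in the left ideal generated by $\mathcal{L}_1$) is the right shape, and the order count is plausible: $\mathcal{L}_2$ has order $4$, and annihilating a length-$5$ combination of shifts of $A^b(1)$ with an order-$2$ operator $\mathcal{L}_1$ generically costs three more shifts, landing on order $7$.

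The genuine gap is that everything rests on Conjecture~\ref{conj.Akb}, which the paper does not prove either; as written you have only shown that one conjecture implies the other. You identify the fix yourself: the closed form~\eqref{eq.Belb} (equivalently~\eqref{eq.Akgamma}) is a proper hypergeometric term in $(k,j)$ for each fixed $n$, so Zeilberger's algorithm is guaranteed to produce and certify the recurrences $\mathcal{L}_1$ and $\mathcal{L}_2$ for $A_{k}^b(1)$ and $A_{k}^b(2)$. Carrying that out --- or citing a certified run --- would turn both conjectures into theorems; without it, the $n=2$ argument remains a sketch and the $n=1$ argument a conditional implication.
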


\section{Lattice $E_6$} \label{sec.E6}
The task is to sum over the 6-dimensional representation with 
limits set by the 8-dimensional cube:
\begin{equation}
\begin{pmatrix}
0 & 0 & 0& 0 &0 & 1/2  \\
-1 & 0 & 0& 0 &0 & 1/2  \\
1 & -1 & 0& 0 &0 & 1/2  \\
0 & 1 & -1 &  0 &0 & 1/2  \\
0 & 0 & 1 & -1 &  0 & -1/2  \\
0 & 0 & 0 & 1 & -1 & -1/2  \\
0 & 0 & 0 & 0 & 1 & -1/2  \\
0 & 0 & 0 & 0 & 0 & -1/2  \\
\end{pmatrix}
\cdot
\begin{pmatrix}
\alpha_1 \\
\alpha_2 \\
\alpha_3 \\
\alpha_4 \\
\alpha_5 \\
\alpha_6 \\
\end{pmatrix}
=
\begin{pmatrix}
p_1 \\
p_2 \\
p_3 \\
p_4 \\
p_5 \\
p_6 \\
p_7 \\
p_8 \\
\end{pmatrix}
.
\end{equation}
This is extended to an 8-dimensional representation
\begin{equation}
\begin{pmatrix}
0 & 0 & 0& 0 &0 & 1/2 & 0 & 0  \\
-1 & 0 & 0& 0 &0 & 1/2  & 0 & 0 \\
1 & -1 & 0& 0 &0 & 1/2  & 0 & 0 \\
0 & 1 & -1 &  0 &0 & 1/2  & 0 & 0 \\
0 & 0 & 1 & -1 &  0 & -1/2  & 0 & 0 \\
0 & 0 & 0 & 1 & -1 & -1/2  & 0 & 0 \\
0 & 0 & 0 & 0 & 1 & -1/2  & 1 & 0 \\
0 & 0 & 0 & 0 & 0 & -1/2  & 0 & 1 \\
\end{pmatrix}
\cdot
\begin{pmatrix}
\alpha_1 \\
\alpha_2 \\
\alpha_3 \\
\alpha_4 \\
\alpha_5 \\
\alpha_6 \\
\alpha_7 \\
\alpha_8 \\
\end{pmatrix}
=
\begin{pmatrix}
p_1 \\
p_2 \\
p_3 \\
p_4 \\
p_5 \\
p_6 \\
p_7 \\
p_8 \\
\end{pmatrix}
.
\end{equation}
maintaining the count of $E_6^b$ by adding the condition $\alpha_7=\alpha_8=0$ to
the lattice sum. Inversion of this matrix equation yields
\begin{equation}
\begin{pmatrix}
1 & -1 & 0& 0 &0 & 0 & 0 & 0  \\
2 & -1 & -1& 0 &0 & 0  & 0 & 0 \\
3 & -1 & -1& -1 &0 & 0  & 0 & 0 \\
2 & -1 & -1 & -1 &-1 & 0  & 0 & 0 \\
1 & -1 & -1 & -1 &  -1 & -1  & 0 & 0 \\
2 & 0 & 0 & 0 & 0 & 0  & 0 & 0 \\
0 & 1 & 1 & 1 & 1 & 1  & 1 & 0 \\
1 & 0 & 0 & 0 & 0 & 0  & 0 & 1 \\
\end{pmatrix}
\cdot
\begin{pmatrix}
p_1 \\
p_2 \\
p_3 \\
p_4 \\
p_5 \\
p_6 \\
p_7 \\
p_8 \\
\end{pmatrix}
=
\begin{pmatrix}
\alpha_1 \\
\alpha_2 \\
\alpha_3 \\
\alpha_4 \\
\alpha_5 \\
\alpha_6 \\
\alpha_7 \\
\alpha_8 \\
\end{pmatrix}
.
\end{equation}
The first but last equation of this linear system argues that
6 components of $p_i$ are confined to $\sum_{i=2,\ldots,6} p_i=0$
while summing over $|p_i|\le n$ to ensure $\alpha_7=0$; the same sum
regulated the 6-dimensional cube $A_5^b$.
The last equation represents the confinement $p_1+p_8=0$ to
ensure $\alpha_8=0$. Since this
is not entangled with the requirement on the other 6 components, the
associated double sum emits a factor $2n+1$. (Imagine counting points
in a square of edge size $2n+1$ along two coordinates $p_1$ and $p_8$, where
$p_1+p_8=0$ admits only points on the diagonal.)
\begin{thm}(Point counts of $E_6$)
\begin{gather}
E_6^b(n)
= (2n+1)A_5^b(n)
=
\frac{1}{5}(1+2n)^2(5+27n+71n^2+88n^3+44n^4) 
\\
= 1,423,8755,65317,293949,978043,2661919,6277545, 13296601,\ldots;
\nonumber
\\
E_6^s(n)
=
\begin{cases}
1, & n=0;\\
\frac{2}{5}n(47+480n^2+528n^4), & n>0;\\
\end{cases}
\\
= 1,422, 8332, 56562, 228632, 684094, 1683876, 3615626, 7019056, \ldots;
\nonumber \\
E_6^b(x) = \frac{1+416x+5815x^2+12880x^3+5815x^4+416x^5+x^6}{(1-x)^7} .
\end{gather}
\end{thm}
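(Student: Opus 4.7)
The plan is to exploit the 8-dimensional embedding already set up in the section: the inverse-matrix display reduces the $E_6$ lattice-point count to counting $(p_1,\ldots,p_8)$ with $|p_i|\le n$ subject to the two conditions $\alpha_7=0$ and $\alpha_8=0$, since these are the equations added when passing from the $6\times 6$ to the $8\times 8$ generator.

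First I would read off the two constraints explicitly from rows 7 and 8 of the inverted matrix. Row 7 gives $\alpha_7 = p_2+p_3+p_4+p_5+p_6+p_7$, so $\alpha_7=0$ forces a vanishing sum over the six coordinates $p_2,\ldots,p_7$, each bounded by $|p_i|\le n$. By the $p$-space definition of the $A$-lattices used in Section \ref{sec.A6}, the number of such tuples is exactly $A_5^b(n)$. Row 8 gives $\alpha_8 = p_1+p_8$, so $\alpha_8=0$ forces $p_8=-p_1$ with $|p_1|\le n$, contributing $2n+1$ admissible pairs. Because the first constraint involves only $p_2,\ldots,p_7$ and the second only $p_1,p_8$, the two groups of variables are disjoint and the count factorizes:
\begin{equation}
E_6^b(n) = (2n+1)\,A_5^b(n).
\end{equation}

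Next I would plug in the closed form for $A_5^b(n)$ proved in \eqref{eq.A5} to obtain the stated polynomial, and then verify the first few listed values numerically as a consistency check. The surface count $E_6^s(n)$ follows by the definition \eqref{eq.ADEs} as the first difference; expanding $(2n+1)A_5^b(n)-(2n-1)A_5^b(n-1)$ and simplifying yields the expression $\frac{2}{5}n(47+480n^2+528n^4)$ for $n>0$ (with the boundary value $E_6^s(0)=1$ handled separately, since $A_5^b(-1)$ has no direct geometric meaning). Finally, the generating function is obtained by either summing the closed polynomial formula against $\sum_{n\ge 0} n^j x^n$ in the standard way, or equivalently by computing the Hadamard-type product corresponding to multiplication by $(2n+1)$, which replaces the denominator $(1-x)^6$ of $A_5^b(x)$ by $(1-x)^7$ and transforms the numerator into the palindromic polynomial $1+416x+5815x^2+12880x^3+5815x^4+416x^5+x^6$.

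There is no deep obstacle here; the main point is the clean \emph{decoupling} of the two linear constraints coming from the last two rows of the inverse matrix, which is what justifies the factorization. The only place requiring care is checking that the $p$-space characterization of $A_5^b$ uses the correct number of variables (six, i.e.\ $k+1=6$ for $k=5$) so that the identification of the $\alpha_7=0$ locus with $A_5^b(n)$ is exact rather than off by a dimension.
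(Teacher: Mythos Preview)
Your argument is correct and follows essentially the same route as the paper: the paper also reads rows~7 and~8 of the inverted $8\times 8$ system, obtains the decoupled constraints $p_2+\cdots+p_7=0$ (identified with $A_5^b(n)$) and $p_1+p_8=0$ (the factor $2n+1$), and multiplies. Your write-up is in fact slightly cleaner, since you spell out explicitly that the six coordinates in the first constraint are $p_2,\ldots,p_7$ (the paper's text has an index-range typo there) and you make the decoupling argument precise.
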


\section{Lattice $E_7$}
The $E_7$ lattice is spanned by
\begin{equation}
\begin{pmatrix}
-1 & 0 & 0 & 0& 0 &0 & 1/2  \\
1& -1 & 0 & 0& 0 &0 & 1/2  \\
0 & 1 & -1 & 0& 0 &0 & 1/2  \\
0 & 0 & 1 & -1 &  0 &0 & 1/2  \\
0 & 0 & 0 & 1 & -1 &  0 & -1/2  \\
0 & 0 & 0 & 0 & 1 & -1 & -1/2  \\
0 & 0 & 0 & 0 & 0 & 1 & -1/2  \\
0 & 0 & 0 & 0 & 0 & 0 & -1/2  \\
\end{pmatrix}
\cdot
\begin{pmatrix}
\alpha_1 \\
\alpha_2 \\
\alpha_3 \\
\alpha_4 \\
\alpha_5 \\
\alpha_6 \\
\alpha_7 \\
\end{pmatrix}
=
\begin{pmatrix}
p_1 \\
p_2 \\
p_3 \\
p_4 \\
p_5 \\
p_6 \\
p_7 \\
p_8 \\
\end{pmatrix}
.
\end{equation}
Again we consider only the sublattice with even $\alpha_7$, that is, integer
$p_i$.
\begin{thm}(Point counts of $E_7$)
\begin{equation}
E_7^b(n) = A_7^b(n).
\end{equation}
\end{thm}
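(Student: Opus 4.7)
The plan is to mirror the $E_6$ derivation of the previous section: augment the $8\times 7$ generator of $E_7$ into an invertible $8\times 8$ matrix by appending one extra column, invert, and identify the vanishing of the added coefficient with the defining relation of $A_7$ given in Section \ref{sec.A6}.

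First I would append the column $e_8=(0,\ldots,0,1)^{\mathrm{T}}$ with a new coefficient $\alpha_8$, impose $\alpha_8=0$ so that the count is preserved, and invert. The inversion is cheap: summing the eight rows of the original $E_7$ generator annihilates every column, because the six bidiagonal columns telescope and the half-integer seventh column cancels as $4\cdot\tfrac{1}{2}-4\cdot\tfrac{1}{2}=0$. Only the contribution of the appended $e_8$ survives, yielding $\alpha_8=\sum_{i=1}^{8}p_i$, so the enforced $\alpha_8=0$ is precisely the $A_7$ condition $\sum_{i=1}^{8}p_i=0$. Back-substitution up the lower-triangular block then gives $\alpha_7=2\sum_{i=1}^{7}p_i$, which under $\sum p_i=0$ collapses to $\alpha_7=-2p_8$ and is automatically even for integer $p_8$; this is the ``even $\alpha_7$'' sublattice flagged in the paragraph preceding the theorem.

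Next I would check that the remaining coefficients $\alpha_1,\ldots,\alpha_6$ impose no additional obstruction: after substituting $\alpha_7=-2p_8$ each of them is a $\mathbb{Z}$-linear combination of the $p_i$, hence automatically integer whenever the $p_i$ are. Conversely every integer $8$-tuple with $\sum p_i=0$ arises in this way. Therefore $E_7^b(n)$ equals the number of integer tuples $(p_1,\ldots,p_8)$ with $|p_i|\le n$ and $\sum_{i=1}^{8}p_i=0$, which is exactly the alternate presentation of $A_7^b(n)$ established in Section \ref{sec.A6}.

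The main obstacle is purely the bookkeeping of the inversion: one has to verify that $\alpha_8=0$ \emph{by itself} captures the $A_7$ constraint and that no parity or linear relation is hiding in the remaining rows. Unlike $E_6$, only one column is appended to make the generator square, so no independent second constraint of the form $p_1+p_8=0$ emerges and hence no additional factor $(2n+1)$ appears—consistent with the bare equality $E_7^b(n)=A_7^b(n)$.
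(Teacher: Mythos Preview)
Your proposal is correct and follows essentially the same approach as the paper: augment the $8\times 7$ generator to an invertible $8\times 8$ matrix with the column $e_8$, and read off that $\alpha_8=0$ is exactly $\sum_{i=1}^8 p_i=0$, the $A_7$ condition of Section~\ref{sec.A6}. The paper carries out the full $8\times 8$ inversion and, for emphasis, redoes the parallel $A_7$ embedding, whereas you obtain $\alpha_8=\sum p_i$ by the row-sum observation and cite Section~\ref{sec.A6} directly; your explicit check that $\alpha_7=-2p_8$ is automatically even and that $\alpha_1,\ldots,\alpha_6$ are integral is a point the paper leaves implicit in its displayed inverse.
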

\begin{proof}
We reach out into a direction of the $p_8$ axis adding a unit vector
with axis section $\alpha_8$: $E_7^b(n)$ counts only points with $\alpha_8=0$.
\begin{equation}
\begin{pmatrix}
-1 & 0 & 0 & 0& 0 &0 & 1/2 & 0 \\
1& -1 & 0 & 0& 0 &0 & 1/2 & 0 \\
0 & 1 & -1 & 0& 0 &0 & 1/2 & 0 \\
0 & 0 & 1 & -1 &  0 &0 & 1/2 & 0 \\
0 & 0 & 0 & 1 & -1 &  0 & -1/2 & 0 \\
0 & 0 & 0 & 0 & 1 & -1 & -1/2 & 0 \\
0 & 0 & 0 & 0 & 0 & 1 & -1/2 & 0 \\
0 & 0 & 0 & 0 & 0 & 0 & -1/2 & 1 \\
\end{pmatrix}
\cdot
\begin{pmatrix}
\alpha_1 \\
\alpha_2 \\
\alpha_3 \\
\alpha_4 \\
\alpha_5 \\
\alpha_6 \\
\alpha_7 \\
\alpha_8 \\
\end{pmatrix}
=
\begin{pmatrix}
p_1 \\
p_2 \\
p_3 \\
p_4 \\
p_5 \\
p_6 \\
p_7 \\
p_8 \\
\end{pmatrix}
.
\end{equation}
The inverse of this equation is
\begin{equation}
\begin{pmatrix}
0 & 1 & 1 & 1& 1 &1 & 1 & 0 \\
1& 1 & 2 & 2& 2 &2 & 2 & 0 \\
2 & 2 & 2 & 3& 3 &3 & 3 & 0 \\
3 & 3 & 3 & 3 &  4 &4 & 4 & 0 \\
2 & 2 & 2 & 2 & 2 &  3 & 3 & 0 \\
1 & 1 & 1 & 1 & 1 & 1 & 2 & 0 \\
2 & 2 & 2 & 2 & 2 & 2 & 2 & 0 \\
1 & 1 & 1 & 1 & 1 & 1 & 1 & 1 \\
\end{pmatrix}
\cdot
\begin{pmatrix}
p_1 \\
p_2 \\
p_3 \\
p_4 \\
p_5 \\
p_6 \\
p_7 \\
p_8 \\
\end{pmatrix}
=
\begin{pmatrix}
\alpha_1 \\
\alpha_2 \\
\alpha_3 \\
\alpha_4 \\
\alpha_5 \\
\alpha_6 \\
\alpha_7 \\
\alpha_8 \\
\end{pmatrix}
,
\label{E7inv.eq}
\end{equation}
and---reading the last line---the restriction on the $\alpha_8$ coordinate
implied by the embedding translates into $\sum_i p_i=0$. In comparison,
we can also embed the $A_7$ lattice into its 8-dimensional host,
\begin{equation}
\begin{pmatrix}
1 & 0 & 0 & 0& 0 &0 & 0 & 0 \\
-1& 1 & 0 & 0& 0 &0 & 0 & 0 \\
0 & -1 & 1 & 0& 0 &0 & 0 & 0 \\
0 & 0 & -1 & 1 &  0 &0 & 0 & 0 \\
0 & 0 & 0 & -1 & 1 &  0 & 0 & 0 \\
0 & 0 & 0 & 0 & -1 & 1 & 0 & 0 \\
0 & 0 & 0 & 0 & 0 & -1 & 1 & 0 \\
0 & 0 & 0 & 0 & 0 & 0 & -1 & 1 \\
\end{pmatrix}
\cdot
\begin{pmatrix}
\alpha_1 \\
\alpha_2 \\
\alpha_3 \\
\alpha_4 \\
\alpha_5 \\
\alpha_6 \\
\alpha_7 \\
\alpha_8 \\
\end{pmatrix}
=
\begin{pmatrix}
p_1 \\
p_2 \\
p_3 \\
p_4 \\
p_5 \\
p_6 \\
p_7 \\
p_8 \\
\end{pmatrix}
,
\end{equation}
and invert this representation, too:
\begin{equation}
\begin{pmatrix}
1 & 0 & 0 & 0& 0 &0 & 0 & 0 \\
1& 1 & 0 & 0& 0 &0 & 0 & 0 \\
1 & 1 & 1 & 0& 0 &0 & 0 & 0 \\
1 & 1 & 1 & 1 &  0 &0 & 0 & 0 \\
1 & 1 & 1 & 1 & 1 &  0 & 0 & 0 \\
1 & 1 & 1 & 1 & 1 & 1 & 0 & 0 \\
1 & 1 & 1 & 1 & 1 & 1 & 1 & 0 \\
1 & 1 & 1 & 1 & 1 & 1 & 1 & 1 \\
\end{pmatrix}
\cdot
\begin{pmatrix}
p_1 \\
p_2 \\
p_3 \\
p_4 \\
p_5 \\
p_6 \\
p_7 \\
p_8 \\
\end{pmatrix}
=
\begin{pmatrix}
\alpha_1 \\
\alpha_2 \\
\alpha_3 \\
\alpha_4 \\
\alpha_5 \\
\alpha_6 \\
\alpha_7 \\
\alpha_8 \\
\end{pmatrix}
.
\end{equation}
The implied slice $\alpha_8=0$ and the last line of this equation leads to
the same condition $\sum_i p_i=0$ as derived from (\ref{E7inv.eq}).
Since both cases
select from the $(2n+1)^8$ points in the hypercube subject to the same
condition, both counts are the same.
\end{proof}

\section{Lattice $E_8$} \label{sec.E8}
The $E_8$ coordinates are mediated by
\begin{equation}
\begin{pmatrix}
2& -1 & 0 & 0 & 0& 0 &0 & 1/2  \\
0 & 1& -1 & 0 & 0& 0 &0 & 1/2  \\
0 & 0 & 1 & -1 & 0& 0 &0 & 1/2  \\
0 & 0 & 0 & 1 & -1 &  0 &0 & 1/2  \\
0 & 0 & 0 & 0 & 1 & -1 &  0 & -1/2  \\
0 & 0 & 0 & 0 & 0 & 1 & -1 & -1/2  \\
0 & 0 & 0 & 0 & 0 & 0 & 1 & -1/2  \\
0 & 0 & 0 & 0 & 0 & 0 & 0 & -1/2  \\
\end{pmatrix}
\cdot
\begin{pmatrix}
\alpha_1 \\
\alpha_2 \\
\alpha_3 \\
\alpha_4 \\
\alpha_5 \\
\alpha_6 \\
\alpha_7 \\
\alpha_8 \\
\end{pmatrix}
=
\begin{pmatrix}
p_1 \\
p_2 \\
p_3 \\
p_4 \\
p_5 \\
p_6 \\
p_7 \\
p_8 \\
\end{pmatrix}
.
\label{E8.eq}
\end{equation}
Explicit numbers are found with the formula in Theorem \ref{V.thm}:
\begin{thm}(Lattice points in the bulk and on the surface of $E_8$)
\begin{gather}
E_8^b(n) = V_8^g(n)
= 1,3281,195313,2882401,21523361\ldots \\
E_8^s(n) = 
\begin{cases}
1, & n=0 ;\\
16n(4n^2+1)(16n^4+24n^2+1), & n>0 ;\\
\end{cases}
\\
=1,3280,192032,2687088,18640960,85656080,\ldots
\nonumber
\end{gather}
\end{thm}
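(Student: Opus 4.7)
The plan is to reduce the $E_8$ count to the already-computed quantity $V_8^g(n)$ by inverting the matrix in \eqref{E8.eq} and then apply Theorem \ref{V.thm} directly to read off both $E_8^b$ and $E_8^s$.

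First, I would invert \eqref{E8.eq} by back-substitution starting at the bottom row: $\alpha_8 = -2p_8$ gives an integer whenever $p_8 \in \mathbb{Z}$; $\alpha_7 = p_7 - p_8$ is automatic; and each subsequent $\alpha_j$ for $j = 6,5,4,3,2$ emerges as a $\mathbb{Z}$-linear combination of the $p_i$, so none of these rows imposes a nontrivial constraint on integer $p$-tuples. The first row of the matrix is distinguished by the entry $2$ in position $(1,1)$, and back-substituting the expression for $\alpha_2$ yields
\begin{equation*}
2\alpha_1 \;=\; p_1 + p_2 + \cdots + p_8 .
\end{equation*}
Hence an integer point $(p_1,\dots,p_8) \in \mathbb{Z}^8$ inside the hypercube $|p_i| \le n$ arises from some integer $\alpha$-vector if and only if $\sum_i p_i$ is even. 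This is precisely the defining condition for $V_8^g(n)$ in Definition \ref{V.def} with $k=8$, so $E_8^b(n) = V_8^g(n)$, and the closed form $V_8^g(n) = \bigl[(2n+1)^8+1\bigr]/2$ from Theorem \ref{V.thm} (the even-$k$ branch) expands into the polynomial already tabulated in Table \ref{V.tab}.

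For the surface count I would apply \eqref{eq.ADEs}: for $n \ge 1$,
\begin{equation*}
E_8^s(n) \;=\; V_8^g(n) - V_8^g(n-1) \;=\; \frac{(2n+1)^8 - (2n-1)^8}{2},
\end{equation*}
while $E_8^s(0) = E_8^b(0) = 1$. Expanding $(2n+1)^8 - (2n-1)^8$ retains only the odd-index binomial terms in powers of $2n$, producing $32n + 896n^3 + 3584 n^5 + 2048 n^7$; halving and extracting the factor $16n$ leaves the sextic $64n^6 + 112n^4 + 28n^2 + 1$, which I would verify factors as $(4n^2+1)(16n^4+24n^2+1)$ by direct multiplication, matching the claimed formula.

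The substantive step is really only the matrix inversion and the identification $E_8^b = V_8^g$; once that is established, everything else is routine binomial arithmetic. The chief pitfall is bookkeeping with the $\pm \tfrac{1}{2}$ entries of the generator and ensuring that the integer condition collapses cleanly onto the single parity requirement $\sum_i p_i \in 2\mathbb{Z}$. Numerical spot-checks at $n=1$, where $V_8^g(1) = 3281$ and $16 \cdot 1 \cdot 5 \cdot 41 = 3280$, provide a convenient safeguard.
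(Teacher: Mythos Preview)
Your proposal is correct and follows essentially the same route as the paper: invert the generator matrix, observe that every row of the inverse except one has integer entries, and identify the single nontrivial constraint $\sum_i p_i \equiv 0 \pmod 2$ with Definition~\ref{V.def}. Your back-substitution makes explicit what the paper compresses into one sentence, and your derivation of the factored form of $E_8^s(n)$ from Theorem~\ref{V.thm} is additional detail the paper omits, but the substance is identical.
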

\begin{proof}
The inverse of the generator matrix in \eqref{E8.eq} has exactly one row filled
with the value $1/2$, all other entries are integer. As already argued for
the $D$-lattices in sections \ref{D3.sec}--\ref{D4.sec}, this leads to the constraint that the
sum over the $p_i$ must remain even, which matches Definition \ref{V.def}.
\end{proof}

\section{Summary} 
For $D_k$ lattices, the number of lattice points inside a hypercube
is essentially a $k$-th order polynomial of the edge length, summarized in Eq.\ \eqref{eq.Dkbn}.
For $A_k$ lattices, explicit polynomials have been computed for $k\le 5$
in Eqs.\ \eqref{eq.A2}, \eqref{eq.A3}, \eqref{eq.A4} and \eqref{eq.A5}.
For higher dimensions, the numbers are centered multinomial coefficients
\eqref{eq.Belb} which can be quickly converted to
$k$-th order polynomials in $n$.
The counts for $E_6$, $E_7$ and $E_8$ are closely associated
with the counts for $A_5$, $A_7$ and $D_8$, respectively.

\bibliographystyle{amsplain}
\bibliography{all}

\end{document}